\newtheorem{theorem}{Theorem}[section]
\newtheorem{corollary}[theorem]{Corollary}
\newtheorem{lemma}[theorem]{Lemma}
\newcommand{\R}{\mathbb{R}}
\newcommand{\C}{\mathbb{C}}
\newcommand{\N}{\mathbb{N}}
\newcommand{\bel}{\begin{align} \label}
\newcommand{\ee}{\end{align}}
\newcommand{\s}{\mathbb{S}}
\newcommand{\p}{\partial}
\newcommand{\norm}[1]{\|#1\|}
\newcommand{\Abs}[1]{\left|#1\right|}
\newcommand{\abs}[1]{|#1|}
\newcommand{\set}[1]{\left\{#1\right\}}
\newcommand{\para}[1]{\left(#1\right)}
\newcommand{\seq}[1]{\langle #1\rangle}
\newcommand{\To}{\longrightarrow}
\newcommand{\w}{\varphi_{1,\tau}^*}
\newcommand{\ww}{\varphi_{2,\tau}^*}
\newcommand{\dive}{\textrm{div}}
\renewcommand\theequation%
\begin{document}

\title[Borg-Levinson type theorem for bi-harmonic operator]{Determination of first order perturbation for bi-harmonic operator by asymptotic boundary spectral data}

\author[N.~Aroua]{Nesrine Aroua}
\author[M.~Bellassoued]{Mourad Bellassoued}
\address{N.~Aroua. Universit\'e de Tunis El Manar, Ecole Nationale d'ing\'enieurs de Tunis, ENIT-LAMSIN, B.P. 37, 1002 Tunis, Tunisia}
\email{nesrine.aroua@enit.utm.tn }

\address{M.~Bellassoued. Universit\'e de Tunis El Manar, Ecole Nationale d'ing\'enieurs de Tunis, ENIT-LAMSIN, B.P. 37, 1002 Tunis, Tunisia}
\email{mourad.bellassoued@enit.utm.tn }

\date{}
\subjclass[2010]{Primary 35R30, 35J30, Secondary: 35P05. } 
\keywords{Multidimensional Borg-Levinson theorem, bi-harmonic operator, Uniqueness, Stability estimate.}
\begin{abstract}
This article deals with the multidimensional Borg-Levinson theorem for perturbed bi-harmonic operator. More precisely, in a bounded smooth domain of $\R^n$, with $n \geq 2$, we prove the stability of the first and zero order coefficients of the bi-harmonic operator from some asymptotic behavior of the boundary spectral data of the corresponding bi-harmonic operator i.e., the Dirichlet eigenvalues and the Neumann trace on the boundary of the associated eigenfunctions.  
\end{abstract}

\maketitle

\section{Introduction}\label{sect. Intro}

Let $\Omega \subset \mathbb{R}^n$, $n \geq 2$ be a bounded simply connected domain, with $\mathcal C^2$ boundary $\Gamma=\partial\Omega$ and  consider the operator of order four 
\begin{align}\label{1.1}
\mathcal{H}_{B,q} = (-\Delta)^2 -2iB\cdot\nabla-i\dive B+q,
\end{align}
where $\nabla$ denotes the gradient, $\Delta$ denotes the Laplacian in $\R^n$ and where the coefficients $B=(b_1,\ldots,b_n)$ and $q$ are assumed to be real-valued. Associated to $\Omega$ and $\Gamma$, we have the Sobolev spaces $H^s(\Omega)$ and $H^s(\Gamma)$, $s\in\R$. For $s,t>0$, we denote the space
\begin{align}
H^{s,t}(\Gamma)=H^s(\Gamma)\times H^t(\Gamma),
\end{align}
equipped with the norm
\begin{align}
\norm{f}_{H^{s,t}(\Gamma)}=\norm{f_0}_{H^s(\Gamma)}+\norm{f_1}_{H^t(\Gamma)},\quad f=(f_0,f_1)\in H^{s,t}(\Gamma).
\end{align}
We also denote $\seq{\cdot, \cdot}$ the inner product scalar in $({L^2(\Gamma ))^2}$ with associated norm $\norm{\cdot}$.
Let $\gamma_D$ and $\gamma_N$ be the Dirichlet and Neumann trace operators, respectively, given by
\begin{align}
\begin{array}{ccccc}
\gamma_D & : & H^4(\Omega) & \To & H^{7/2,5/2}(\Gamma) \cr
 & & u & \longmapsto & \gamma_D(u)=(u,\p_\nu u),
\end{array}
\end{align}
and 
\begin{align}
\begin{array}{ccccc}
\gamma_N & : & H^4(\Omega) & \To & H^{1/2,3/2}(\Gamma) \cr
 & & u & \longmapsto & \gamma_N(u)=(\p_\nu \Delta u-i(B\cdot \nu)u,-\Delta u),
\end{array}
\end{align}
which are bounded and subjective, see \cite{GG}. Here $\nu$ is the unit outer normal to the boundary $\Gamma$. As it was noted in \cite{NS} the extension of the operator $\mathcal{H}_{B,q}$ defined on $\mathcal{C}_0^\infty(\Omega)$, still denoted by $\mathcal{H}_{B,q}$, is a self-adjoint operator semi-bounded from below, with the domain
\begin{align}
\mathcal{D}(\mathcal{H}_{B,q})=\set{u\in H^4(\Omega),\,\, \gamma_D(u)=0}.
\end{align}
Therefore by Kato-Rellich theorem \cite{KT} the spectrum of the operator $\mathcal{H}_{B,q}$ is discrete, accumulating at $+\infty$, consisting of real eigenvalues $\lbrace \lambda_k; \; k\in \mathbb{N}^*\rbrace$ of finite multiplicity 
$$
-\infty<\lambda_{1}\leq \lambda_{2}\leq \ldots \leq \lambda_{k}\rightarrow +\infty,\quad \text{as $k \rightarrow \infty$.}
$$
In the sequel $\lbrace \lambda_{k};\;k\in \mathbb{N}^* \rbrace$, $\phi_k\in \mathcal{D}(\mathcal{H}_{B,q})$, denotes an associated orthonormal basis of $L^2(\Omega)$ consisting of eigenfunctions of $\mathcal{H}_{B,q}$, i.e. $\phi_k$ solves 
\begin{align}\label{1.7}
\mathcal{H}_{B,q} \phi_k = \lambda_k \phi_k\; \text{in}\; \Omega,\quad \gamma_D(\phi_k) = 0 \; \text{on}\; \Gamma.
\end{align} 
Recall that  $\mathcal{D}(\mathcal{H}_{B,q})$ embedded continuously into $H^{4}(\Omega)$, then there exist a constant $C>0$ such that 
\begin{align}\label{1.8}
\| \phi_{k} \|_{H^4(\Omega)} \leq C ( | \lambda_k |+1).
\end{align}
Therefore by trace theorem we get
\begin{align}\label{1.9}
\norm{\gamma_N(\phi_{k})}_{ H^{1/2,3/2}(\Gamma) }\leq C (| \lambda_k |+1).
\end{align}
On the other hand, by Weyl's asymptotics (see \cite{WH}, page 447), there exists a positive constant $C\ge 1$, such that
\begin{align}\label{1.10}
C^{-1}k^{4/n}\leq\lambda_k\leq C k^{4/n}, \quad \text{as $k \rightarrow \infty$.}
\end{align}
Therefore we have
\begin{align}
\norm{\gamma_N(\phi_{k})}_{ H^{1/2,3/2}(\Gamma) }\leq Ck^{4/n}.
\end{align}
 The question under consideration in this article is whether one can stably determine  the vector field $B$ and the  electric potential $q$ from some asymptotic knowledge of the boundary spectral data $\{(\lambda_k,\gamma_N(\phi_k)),\, k\in\mathbb N^*\}$ of $\mathcal{H}_{B,q}$.
\subsection{Known results}  
Equations involving the polyharmonic operators $(-\Delta)^m$, $m \geq 2$, arise in many areas of physics and geometry, for example the study of vibration of beams, the Kirchhff-Love plate equation in the theory of elasticity and the study of the Paneitz-Branson operator in the theory of conformal geometry, for more details we refer the reader to \cite{GGS} and \cite{MVV}. Therefore, many authors have recently been interested in the inverse problem for this type of operator, we mention here for examples to \cite{B2, BKS, CH, C1, GK, K2, K1, Y} and to recent work \cite{AB}. All the above works are interested in determining the coefficients of polyharmonic operator from the boundary measurements
unclosed in the Dirichlet-to-Neumann map. In this paper, we are interested in inverse 
spectral problem which consists in determining the coefficients of an operator from the 
spectral boundary data.\medskip\\
The study of this type of problems goes back to Ambartsumian, in 1929, 
who showed in \cite{AV} the uniqueness of the real valued potential $q$, appearing in the 
Schr\"odinger operator $\mathcal{L}_q=\Delta+q$ in one dimensional domain $\Omega=
(0,1)$, when the Neumann spectrum of $\mathcal{L}_q$ equals to $\{\pi^2k^2,~~k \in 
\mathbb N^*\}$. Later Borg \cite{BG} proved that in general the eigenvalues of 
$\mathcal{L}_q$ is not sufficient to determine the potential $q$ but if one consider an 
additional spectral data then $q$ is uniquely determined. After Borg's result, Levinson 
\cite{LN} simplified the proof of Borg's result and proved the uniqueness of the 
potential $q$ from the knowledge of spectral data $\{(\lambda_k, \|\phi_k
\|_{L^2(0,1)}),\, k\in\mathbb N^*\}$, where $(\lambda_k)_{k\in\mathbb N^*}$ and 
$(\phi_k)_{k\in\mathbb N^*}$ are respectively the eigenvalue of $\mathcal{L}_q$ and the 
orthogonal basis of $L^2(0,1)$ satisfy the condition $\phi_k^\prime(0)=1$. By 
considering the same boundary data, Gel'fand and Levitan have given in \cite{GL} a 
reconstruction formula of the potential $q$. In 1988, Gel'fand and Levitan's result was 
extended by A. Nachman, Sylvester and Uhlmann \cite{NSU} and independently by Novikov 
\cite{NR} to multidimensional domain $\Omega \in \R^n$, where $n \geq 2$. They showed 
that the electric potential $q$ is uniquely determined from the full knowledge of the 
boundary spectral data $\{(\lambda_k, \partial_\nu \phi_k |_{\partial \Omega}),\, 
k\in\mathbb N^*\}$, where $(\lambda_k,  \phi_k)$ is the $k^\text{th}$ eigen pair of 
$\mathcal{L}_q$. This result is known now in the literature as multidimensional Borg-
Levinson type theorem. In the case where finitely many boundary spectral eigen pairs 
$(\lambda_k,  \phi_k)$ remain unknown, it was proved by Isozaki in \cite{IH} and after 
by Choulli in \cite{CM1} that the uniqueness is still valid. Moreover in \cite{KOM}, 
Kian, Oksanen, and Morancey proved that the uniqueness remains true when knowing only 
the partial boundary spectral data. The stability question of determining the potential 
$q$ from the boundary spectral data of $\mathcal{L}_q$, was begun by Alessandrini and 
Sylvester \cite{AS}. Another variation of this result was given by Choulli and Stefanov 
\cite{CS} in the case where the spectral data are asymptotically "close", by Bellassoued, 
Choulli and Yamamoto \cite{BCY} in the case where the boundary spectral data are measured 
on a part of the boundary, and by Choulli \cite{CM2} in the case where $\Omega$ is an 
admissible Riemannian manifold. The Choulli and Stefanov's result was improved by 
Kavian, Kian and Soccorsi \cite{KKS} and later by Soccorsi \cite{SE}, by assuming a 
weaker condition imposed on the asymptotic spectral data. All the above mentioned works 
are obtained in the case where the potential $q$ is sufficiently smooth. In the case of 
singular (i.e., not bounded) potential we refer to the works \cite{BKMS, KS,PS, 
PV}.\medskip\\
In the presence of magnetic potential, the multidimensional 
Borg-Levinson theorems for the magnetic Schr\"odinger operator $\mathcal{L}_{B,q}=
(-i\nabla+B)^2+q$ was addressed by Serov who proved in \cite{SV1} the uniqueness of 
magnetic field $\mathrm{curl}~B$ and the potential $q$, when $q$ is singular, from the 
boundary spectral data $\{(\lambda_k, \partial_\nu \phi_k |_{\partial \Omega}),\, 
k\in\mathbb N^*\}$. Here $\mathrm{curl}~B$ is the $2-$form of a vector $B=(B_1,\ldots, 
B_n)$ defined by
$$\mathrm{curl}~B=\sum_{j, k=1}^{n} \mathrm{b}_{jk} d x_{j} \wedge d x_{k},$$ with  
$\mathrm{b}_{jk}(x)=\partial_{x_j}B_k-\partial _{x_k}B_j , ~~j, k=1, \ldots, n$.\\
In the case of bounded potential, the same result was given by Bellassoued and Mannoubi 
who proved in \cite{BM} the uniqueness from the partial boundary spectral data, and by 
Kian who showed in \cite{KY} the uniqueness by assuming that the spectral data is known 
asymptotically only, when $\Omega$ is a bounded domain. The Kian's result was extended 
later by Bellassoued, Choulli, Dos Santos Ferreira, Kian and Stefanov in \cite{BCDKS} to 
Riemannian manifold.\medskip \\
However, to the best of our knowledge there is a few works concerning the inverse 
spectral problems for higher order elliptic operators. In \cite{KP}, Krupchyk and P\"aiv\"arinta established the multidimensional Borg-Levinson theorem for the operator $P+q$, 
where $P$ is elliptic partial differential operator of order $2m$, with $m\geq 1$. In 
particular, for $P=(-\Delta)^m$, with $m\geq 1$, they proved the Borg-Levinson theorem 
with incomplete spectral data. Later Serov proved in \cite{SV} that the boundary 
spectral data $\{(\lambda_k,\Delta \phi_k |_{\partial \Omega}, \partial_\nu \phi_k 
|_{\partial \Omega}),\, k\in\mathbb N^*\}$ of the perturbed bi-harmonic operator, 
$\mathcal{H}_{a,B,q} = (-\Delta)^2-\dive(a \nabla) -2iB\cdot\nabla-i\dive B+q$, uniquely 
determine the second order perturbation $a \in W^{2, p}(\Omega, \R)$, the vector field 
$B \in W^{1, p}(\Omega, \R)$ and the potential $q \in L^p(\Omega, \R)$, where $p> n/2$ 
and $n \geq 3$. Recently, in the case where $a=0$ and $B=0$, Li, Yao and Zhao proved in 
\cite{LYZ} a H\"older stability estimate for the inverse spectral problems of the bi-harmonic operator. The two above mentioned papers are concerned only in the case of full 
spectral data. In this work, we will extend the result of Kian \cite{KY} and of Bellassoued and al. \cite{BCDKS} (in the case where the domain is bounded) to the bi-
harmonic operator. More precisely, we prove the stability estimate of the first order perturbation coefficients $(B, q)$ of the bi-harmonic operator in the case where the boundary spectral data is known asymptotically.

\subsection{Statement of the main results}

We here state the main results of this paper concerning the identification of the first order perturbation coefficients $(B, q)$ of the bi-harmonic operator from some asymptotic knowledge of the boundary spectral data $\{(\lambda_k,\gamma_N(\phi_k))\}$, for each $ k\in\mathbb N$.
\smallskip 

Let us first indicate the required conditions for admissible coefficients $(B, q)$. Let $\mathcal{O}$ be a fixed open neighborhood of $\Gamma$ in $\overline{\Omega}$. Given $B_0 \in W^{1,\infty}(\Omega,\R^n)$, $M>0$ and $\sigma_\ell >\frac{n}{2}$, for $\ell=1,2$. We define the class of admissible potential $q$ and vector field $B$ respectively by
\begin{align*}
   \mathcal{Q}_{\sigma_1}(M)&=\{q \in L^{\infty}(\Omega),~~ \|q\|_{H^{\sigma_1}(\Omega)} \leq M \},
\end{align*} 
and
\begin{align*}
    \mathcal{B}_{\sigma_2}(M, B_0)&=\{B \in W^{1,\infty}(\Omega,\R^n),~~ \|B\|_{H^{\sigma_2}(\Omega)} \leq M\quad \text{and }\quad B=B_0, ~~\text{in} ~\mathcal{O}\},
\end{align*}
We associate to the operator $\mathcal{H}_{\ell}:=\mathcal{H}_{B_\ell,q_\ell}$ the eigenvalues $(\lambda_{\ell,k})_{k\in\mathbb N^*}$ and an associated orthonormal basis of eigenfunctions $(\phi_{\ell,k})_{k\in\mathbb N^*}$. \smallskip \\
Our main result states that it is possible to stably determine
the vector field $B$ and the electric potential $q$ under the assumption that the boundary spectral data are asymptotically close. More precisely, we will prove the following theorem in Section \ref{sect5}.
\begin{theorem}\label{Th1.1}
Let $B_0$, $M$ and $\sigma_\ell$ as above. Let $B_\ell\in \mathcal{B}_{\sigma_2}(M, B_0)$ and $q_\ell\in  \mathcal{Q}_{\sigma_1}(M)$, for $\ell=1,2$. Then, the asymptotic spectral conditions 
\begin{align} \label{1.13}  
 \sup_{k \ge1}\abs{\lambda_{1,k}-\lambda_{2,k}} < + \infty \; \;\text{and}\;\; \sum_{k\geq 1} k^{4/n}\| \gamma_N(\phi_{1,k})-\gamma_N(\phi_{2,k}) \| <\infty,
\end{align}
implies that $\mathrm{curl}~B_1=\mathrm{curl}~B_2$. Moreover, there exists $C_\ell>0$ and $\theta_\ell \in (0,1)$, for $\ell=1,2$, such that we have
\begin{align} \label{1.15}  
 \norm{q_1-q_2}_{L^\infty(\Omega)}\leq C_1 (\limsup_{k \rightarrow \infty}\abs{\lambda_{1,k}-\lambda_{2,k}})^{\theta_1} <+\infty,
\end{align}
and 
\begin{align} \label{1.15'}  
 \norm{B_1-B_2}_{L^\infty(\Omega)}\leq C_2 \bigr(\limsup_{k \rightarrow \infty}\abs{\lambda_{1,k}-\lambda_{2,k}} \bigr)^{\theta_2} <+\infty.
\end{align}
Here $C_\ell$ depends only on $\Omega$, $n$ and $M$, and $\theta_\ell$ depends only on $n$.
\end{theorem}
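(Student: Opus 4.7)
The plan is to reduce the spectral problem to a boundary integral identity and then exploit complex geometric optics (CGO) solutions, following the scheme that Kian and Bellassoued--Choulli--Dos Santos Ferreira--Kian--Stefanov developed for the magnetic Schr\"odinger operator, now adapted to the fourth-order operator $\mathcal{H}_{B,q}$. First, for $u_1,u_2\in H^4(\Omega)$ satisfying $\mathcal{H}_{B_1,q_1}u_1=0$ and $\mathcal{H}_{B_2,q_2}u_2=0$, integrations by parts yield an Alessandrini-type identity of the shape
\begin{align*}
\int_\Omega \bigl[-2i(B_1-B_2)\cdot\nabla u_1 - i\,\dive(B_1-B_2)\,u_1 + (q_1-q_2)u_1\bigr]\overline{u_2}\dv
= \seq{\Lambda\gamma_D(u_1),\gamma_D(u_2)},
\end{align*}
where $\Lambda$ is a boundary operator built out of the Neumann traces of the solutions of the two Dirichlet problems. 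The first key step is to express $\Lambda$ spectrally via the pair $(\lambda_{\ell,k},\gamma_N(\phi_{\ell,k}))$ and to show, using the two asymptotic hypotheses in \eqref{1.13} together with \eqref{1.10}, that the portion of $\Lambda$ stemming from the tails $k\ge N$ is quantitatively small (the $k^{4/n}$ weight in \eqref{1.13} is exactly the one needed to dominate the $H^{1/2,3/2}$ trace estimate \eqref{1.9} after termwise differencing), while the finite--$N$ portion is controlled by $\limsup|\lambda_{1,k}-\lambda_{2,k}|$.

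Next I would construct CGO solutions for $\mathcal{H}_{B,q}$. Since the principal part factors as $(-\Delta)^2=(-\Delta-\tau^2)(-\Delta+\tau^2)$, one can seek solutions of the form $u=e^{-i\zeta\cdot x}(a(x)+r(x,\tau))$ with $\zeta=\zeta(\tau,\xi)\in\C^n$ satisfying $\zeta\cdot\zeta=0$ and $|\zeta|\sim\tau\to\infty$, where $a$ solves a first-order transport equation governed by $B$ (of the form $\zeta\cdot(\nabla a - iBa)=0$ at leading order, giving $a=\exp(-i\int B\cdot\omega\,dt)$ along the direction $\omega=\Re\zeta/|\Re\zeta|$) and $r$ is an $O(\tau^{-1})$ remainder obtained by a Carleman/resolvent estimate for the conjugated bi-harmonic operator. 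Inserting a pair of such CGOs associated with $(B_1,q_1)$ and $(B_2,q_2)$ into the integral identity and letting $\tau\to\infty$, the leading order in $\tau$ produces the Fourier transform of a one-form built from $B_1-B_2$ evaluated on $\xi^\perp$, forcing the transversal components of the Fourier transform of $B_1-B_2$ to vanish; varying $\omega$ this gives $\widehat{\mathrm{curl}(B_1-B_2)}=0$, i.e.\ $\mathrm{curl}\,B_1=\mathrm{curl}\,B_2$. The next asymptotic order, together with the gauge freedom fixed by $B_1=B_2=B_0$ on the neighborhood $\mathcal{O}$ of $\Gamma$ (which permits writing $B_1-B_2=\nabla\psi$ for a $\psi$ compactly supported in $\Omega$), converts the CGO computation into a control on the Fourier transform of $q_1-q_2$.

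From the previous step one obtains, for every $\xi\in\R^n$, a pointwise inequality of the form
\begin{align*}
|\widehat{q_1-q_2}(\xi)| + |\widehat{\mathcal{A}(B_1-B_2)}(\xi)| \le C\Bigl(\tau^{\alpha}\,\varepsilon + \tau^{-\beta}\Bigr),
\end{align*}
where $\varepsilon=\limsup_k|\lambda_{1,k}-\lambda_{2,k}|$, $\mathcal{A}$ is an algebraic map recovering $B_1-B_2$ from information controlled by the CGO pair, and $\alpha,\beta>0$ come from the operator norm of $\Lambda$ (depending polynomially on $\tau$ through the decay of the CGO amplitudes on $\Gamma$) and from the remainder estimates respectively. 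Optimizing the splitting in $\tau$ yields a low-frequency $L^2$ bound of H\"older type, $\|q_1-q_2\|_{H^{-s}(\Omega)}+\|B_1-B_2\|_{H^{-s}(\Omega)}\le C\varepsilon^{\mu}$ for some $s,\mu>0$. Finally, combining this negative-order estimate with the a priori bound $\|q_\ell\|_{H^{\sigma_1}}\le M$, $\|B_\ell\|_{H^{\sigma_2}}\le M$ and $\sigma_\ell>n/2$ through Sobolev interpolation and the embedding $H^{\sigma_\ell}\hookrightarrow L^\infty$ upgrades this to the $L^\infty$ H\"older estimates \eqref{1.15}--\eqref{1.15'} with exponents $\theta_\ell$ depending only on $n$. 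The main obstacle I foresee is the first step: turning the two asymptotic conditions in \eqref{1.13} into a usable operator norm bound on $\Lambda$, because the bi-harmonic boundary trace operator $\gamma_N$ mixes $\p_\nu\Delta u$ and $\Delta u$ (and involves $B\cdot\nu$), so both series in \eqref{1.13} must be assembled carefully, via a summation-by-parts/Abel argument, to absorb the $k^{4/n}$ growth of $\|\gamma_N(\phi_k)\|$ guaranteed by Weyl's asymptotics.
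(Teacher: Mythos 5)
Your overall architecture (boundary identity, special solutions, Fourier inversion, interpolation) matches the paper's in spirit, but the core analytic step is different and, as written, has a genuine gap. You propose zero-energy CGO solutions $e^{-i\zeta\cdot x}(a+r)$ with $\zeta\cdot\zeta=0$, an amplitude solving $\zeta\cdot(\nabla a-iBa)=0$, and a Carleman-controlled remainder. This runs into two problems. First, the phase of a genuine CGO has imaginary part growing like $\tau$, so its Dirichlet trace on $\Gamma$ is of size $e^{c\tau}$; any error coming from the boundary spectral data is then amplified exponentially in $\tau$, and the optimization you describe yields at best a logarithmic, not H\"older, modulus of continuity. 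Second, and more fundamentally, at a fixed (real or zero) spectral parameter the spectral series for the difference of the two boundary operators cannot be bounded by $\limsup_k\abs{\lambda_{1,k}-\lambda_{2,k}}$: the relevant terms behave like $\abs{\lambda_{1,k}-\lambda_{2,k}}\,\norm{\gamma_N(\phi_k)}^2/(\lambda_{1,k}\lambda_{2,k})\sim\abs{\lambda_{1,k}-\lambda_{2,k}}$ by Weyl's law, so the series diverges under hypothesis \eqref{1.13}. Your ``tails small, finite part controlled by the limsup'' step is exactly where this fails, and it is the heart of the matter.

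The paper avoids both issues by following Isozaki's Born-approximation scheme rather than a CGO scheme. It works at the complex spectral parameter $\lambda_\tau^4=(\tau+i)^4$, whose imaginary part is of size $\tau^3$, and uses \emph{exact} solutions $e^{i\varrho_{\ell,\tau}\cdot x}a_2$ of the free equation $((-\Delta)^2-\lambda_\tau^4)u=0$ with $\varrho_{\ell,\tau}=(\tau+i)\omega_{\ell,\tau}$, so the boundary traces grow only polynomially in $\tau$ and no remainder or Carleman estimate is needed: the self-adjoint resolvent bound gives $\norm{R_\ell(\lambda_\tau^4)}\le C\tau^{-3}$, which kills the error terms. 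The amplitudes are polynomials ($a_2=1$ to recover $\omega\cdot B$ and $q$, and a quadratic amplitude to recover the potential $\psi$ with $B=\nabla\psi$), not the magnetic exponential you propose; the first-order perturbation enters linearly in the representation formula, so no gauge-removing amplitude is required. On the spectral side the paper regularizes the series by a second parameter $\mu\to-\infty$ (Lemmas \ref{L3.2} and \ref{L3.3}), then decomposes the resulting quantity $\mathcal{L}^*(\tau,a_2)$ into three pieces: two are $O(\tau^{-1})$ times the series $\sum_k k^{4/n}\norm{\gamma_N(\phi_{1,k})-\gamma_N(\phi_{2,k})}$, summable by \eqref{1.13}, and the third carries the factor $\abs{\lambda_{1,k}-\lambda_{2,k}}$ against a Parseval-type bound (Lemma \ref{L4.4}) that is uniform in $\tau$. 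To make your proposal work you would need to rebuild this entire quantitative bridge; the natural fix is to replace the CGO construction by Isozaki's large-imaginary-frequency exponential solutions.
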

From the above theorem, one can easily deduce the following uniqueness result for the vector field $B$ and the potential $q$ from some asymptotic knowledge of the boundary spectral data.
\begin{corollary}\label{Th1.2}
Let $B_0$, $M$ and $\sigma_\ell$ as above. Let $B_\ell\in \mathcal{B}_{\sigma_2}(M, B_0)$ and $q_\ell\in  \mathcal{Q}_{\sigma_1}(M)$, for $\ell=1,2$. Then, under the asymptotic spectral conditions 
\begin{align} \label{1.14}  
 \limsup_{k \rightarrow \infty}\abs{\lambda_{1,k}-\lambda_{2,k}} =0\; \;\text{and}\;\; \sum_{k\geq 1} k^{4/n}\| \gamma_N(\phi_{1,k})-\gamma_N(\phi_{2,k}) \| <\infty,
\end{align}
we have $B_1= B_2$ and $q_1=q_2$.
\end{corollary}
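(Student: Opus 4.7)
The plan is to derive this corollary as an immediate consequence of Theorem \ref{Th1.1}, with only one small preliminary observation to check that the hypotheses of the theorem are in fact satisfied.

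First I would verify that the two conditions in \eqref{1.14} imply the two conditions in \eqref{1.13}. The second condition is identical in both cases, so nothing is required there. For the first condition, I would use the elementary fact that any nonnegative sequence $(a_k)_{k \geq 1}$ with $\limsup_{k\to\infty} a_k = 0$ is automatically convergent to $0$, hence bounded, hence satisfies $\sup_{k\geq 1} a_k < \infty$. Applying this to $a_k = |\lambda_{1,k}-\lambda_{2,k}|$ gives the first hypothesis of \eqref{1.13}.

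Next, with the hypotheses of Theorem \ref{Th1.1} verified, I would apply the stability estimates \eqref{1.15} and \eqref{1.15'}. Since $\theta_1, \theta_2 \in (0,1)$, the hypothesis $\limsup_{k\to\infty} |\lambda_{1,k}-\lambda_{2,k}| = 0$ forces
\begin{align*}
\norm{q_1 - q_2}_{L^\infty(\Omega)} \leq C_1 \bigl(\limsup_{k\to\infty} |\lambda_{1,k}-\lambda_{2,k}|\bigr)^{\theta_1} = 0,
\end{align*}
and likewise $\norm{B_1 - B_2}_{L^\infty(\Omega)} \leq C_2 \bigl(\limsup_{k\to\infty} |\lambda_{1,k}-\lambda_{2,k}|\bigr)^{\theta_2} = 0$. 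This yields $q_1 = q_2$ and $B_1 = B_2$ almost everywhere in $\Omega$, and by continuity considerations (since both potentials lie in $L^\infty$ and the vector fields lie in $W^{1,\infty}$) this is the desired identification.

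There is no real obstacle here: the entire content of the corollary is packaged inside Theorem \ref{Th1.1}, and the task is purely to observe that a vanishing \emph{limsup} of a nonnegative sequence implies both boundedness (needed to trigger the theorem) and the vanishing of the right-hand side in the H\"older-type stability estimates. The only minor point worth emphasizing in the writeup is the passage from $\limsup = 0$ to $\sup < \infty$, which is what connects the hypotheses \eqref{1.14} to the more general hypotheses \eqref{1.13}.
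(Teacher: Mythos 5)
Your proposal is correct and matches the paper's intent exactly: the corollary is stated there as an immediate consequence of Theorem \ref{Th1.1}, and your only added step --- that a nonnegative sequence with vanishing $\limsup$ converges to $0$, hence is bounded, so \eqref{1.14} implies \eqref{1.13} --- is the right (and only) observation needed. Setting the right-hand sides of \eqref{1.15} and \eqref{1.15'} to zero then gives $q_1=q_2$ and $B_1=B_2$ as you say.
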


To our best of knowledge Theorem \ref{Th1.1} and Corollary \ref{Th1.2} are the first results dealing with inverse spectral problems for bi-harmonic operator from asymptotic knowledge boundary spectral data. Note also that conditions \eqref{1.13} are the weakest conditions on boundary spectral data for bi-harmonic operator. 
\smallskip

Let us explain the main ideas in the proof of Theorem \ref{Th1.1}. In \cite{IH}, Isozaki introduces a simple representation formula related the difference of potential $q=q_1-q_2$ to the boundary spectral data of Schr\"odinger operator. This result is based on \textit{Born-approximation} method which is generally used in the scattering theory. The main idea of our analysis is to extend this strategy to the bi-harmonic operator with first-order perturbation. To do this we need to construct a solution for bi-harmonic operator in the following special form
\begin{align}
    \alpha(x, \lambda)=\alpha_0(x, \lambda)+\lambda^{-1} \alpha_1(x, \lambda),
\end{align}
where $\alpha_j(\cdot, \lambda) \in \mathcal{C}^{\infty}(\overline{\Omega})$, for $j=0, 1$, to be chosen, satisfying some transport equations and $\lambda \in \C$ such that $\mathrm{Re}(\lambda) >1$. More precisely, by choosing a suitable amplitude $\alpha_j$, for $j=0, 1$, we may establish a suitable representation that allows us to express the Fourier transformation of the magnetic field $\mathrm{curl}~B$ in terms of Dirichlet-to-Neumann map associated to $\mathcal{H}_{B,q}u- \lambda u=0$, in $\Omega$, for some $\lambda \in \C $ not in the spectrum of $\mathcal{H}_{B,q}$. Second by using the Hodge decomposition, the vector field $B=B_1-B_2$ can be represented as $B=\nabla \psi$, where $\psi \in W^{2,+\infty}(\Omega, \R)$ satisfy the boundary condition $\psi=0$ on $\Gamma$, then by changing the choices of amplitudes we may establish two other representation formulas which relate the Fourier transformation of the potential $q$ and of the function $\psi$ to the boundary spectral data.

\subsection{Outline}

The remainder of this paper is organized as follows. In section \ref{sect2} we introduce some notations and we give some  useful preliminary properties of solution of the equation $\mathcal{H}_{B,q}u- \lambda u=0$, in $\Omega$, for some $\lambda \in \C $ not in the spectrum of $\mathcal{H}_{B,q}$.
In section \ref{sect3} we give some asymptotic spectral properties of the Dirichlet-to-
Neumann map.
In section \ref{sect4}, we give a simple representation formula related the difference of coefficient to the boundary spectral data of bi-harmonic operator.
Finally, section \ref{sect5} is concerned to prove Theorem \ref{Th1.1} .
\section{Preliminary properties}

\label{sect2}

In this section we introduce some notations and we give some  useful preliminary properties of solution of the equation $\mathcal{H}_{B,q}u- \lambda u=0$, in $\Omega$, for some $\lambda \in \C $ not in the spectrum of $\mathcal{H}_{B,q}$.

 For $B\in W^{1,\infty}(\Omega,\R^n)$, $q\in L^{\infty}(\Omega,\mathbb{R})$ and $\lambda \in \mathbb{C}$, we consider the following boundary value problem  
 \begin{align} \label{2.1}  
 \left\{  
\begin{array}{ll}
 (\mathcal{H}_{B,q}-\lambda)u=0 &\text{in}\; \Omega,\\
  \gamma_D( u)=f &\text{on}\;\Gamma,     
 \end{array}                     
\right. 
\end{align}               
where $f=(f_0,f_1) \in H^{7/2,5/2}(\Gamma)$. Then one can check that there exist a $\lambda_0>0$ such that for $\lambda \in\mathbb C\setminus(- \lambda_0,+\infty)$, problem \eqref{2.1} admits a unique solution $u \in H^{4}(\Omega)$ satisfying
\begin{equation}\label{2.2}
\| u \|_{H^{4}(\Omega)} \leq C_\lambda \|f\|_{H^{7/2,5/2}(\Gamma)}.
\end{equation}
As a consequence, we can show that $\mathcal{D}(\mathcal{H}_{B,q})$ embedded continuously into $H^{4}(\Omega)$. Hence the eigenfunctions of $\mathcal{H}_{B,q}$, $(\phi_{k})_{k\in\mathbb N}$, are lying in $H^{4}(\Omega)$ and we have $\gamma_N(\phi_k)$, for each $k\ge 1$, in $ H^{1/2,3/2}(\Gamma)$. We can also define the Dirichlet-to-Neumann map associated with  \eqref{2.1} by
\begin{align}\label{2.3}
 \begin{array}{ccccc}
\Lambda_{B,q,\lambda} & : &  H^{7/2,5/2}(\Gamma) & \longrightarrow & H^{1/2,3/2}(\Gamma)\\
 & &f:= (f_0,f_1) & \longmapsto & (\partial_\nu \Delta u-i(B\cdot \nu)u,-\Delta u)_{| \Gamma}. 
\end{array}   
\end{align}

By the estimate (\ref{2.2}) and the continuity of the following Neumann trace operator $\gamma_N$, we conclude that the Dirichlet-to-Neumann map $\Lambda_{B,q,\lambda}$ is continuous. Let us observe that  in \cite{SV} the author considered the problem \eqref{2.1} as well as the definition of the Dirichlet-to-Neumann map $\Lambda_{B,q,\lambda}$ (in the presence of second order perturbation) for Dirichlet data lying in some Besov space. Since in our analysis we do not need such improved result, in all this paper we restrict our study of problem \eqref{2.1} to Dirichlet data lying in $H^{7/2,5/2}(\Gamma)$ which simplifies several aspects of the proof of the main result.
\smallskip

For a non negative integer $k$ and $\tau\geq1$, we define Sobolev spaces $H^k_\tau(\Omega)$ and $H^{-k}_\tau(\mathbb{R}^n)$ by the following norms respectively:
\begin{align}\label{2.4}
\|u\|^2_{H^k_\tau(\Omega)}=\sum\limits_{j=0}^k\tau^{2(k-j)}\|u\|^2_{H^j(\Omega)},\quad u\in H^k_\tau(\Omega),
\end{align}
and
\begin{align}\label{2.5}
\|u\|^2_{H^{-k}_\tau(\mathbb{R}^n)}=\|\seq{\xi,\tau}^{-k}\hat{u}\|^2_{L^2(\mathbb{R}^n)},\quad u\in H^{-k}_\tau(\mathbb{R}^n),
\end{align}
where $\langle\xi,\tau\rangle=\sqrt{\tau^2+|\xi|^2},$ for $\tau\geq\tau_0>0$ and $\xi\in\mathbb{R}^n$. We can directly verify that 
\begin{align}\label{2.6}
\|u\|_{H^{-2}_\tau(\mathbb{R}^n)}\leq\frac{C}{\tau}\|u\|_{H^{-1}_\tau(\mathbb{R}^n)}.
\end{align}
Let us define the classes of symbols of order $m\in\mathbb{R}$ by
$$S^m_{\tau}=\Big\{a(\cdot,\cdot,\tau)\in \mathcal{C}^{\infty},\ \Big\vert \partial^\alpha_x \partial^\beta_\xi a(x,\xi,\tau)\Big\vert\leq C_{\alpha,\ \beta}\langle \xi,\tau \rangle^{m-\vert\beta\vert} \Big\}.$$
For $a\in S^m_{\tau}$ and $b\in S^{m^\prime}_\tau$, we have 
\begin{align}\label{2.7}
\Vert a(x,\ D,\tau) u\Vert_{H^{s-m}_{\tau}(\mathbb{R}^n)}\leq C\Vert u \Vert_{H^s_{\tau}(\mathbb{R}^n)},
\end{align}
and 
\begin{align}\label{2.8}
\Vert [a(x,\ D,\tau),\ b(x,\ D,\tau)] u\Vert_{H^{s-m-m^\prime+1}_{\tau}(\mathbb{R}^n)}\leq C\Vert u \Vert_{H^s_{\tau}(\mathbb{R}^n)}.
\end{align}
For $m\in\R$, and $\tau>0$, we define $(\Delta^2+\tau^4)^{m/4}$ corresponding to the symbol 
$$
a(\xi,\tau)=(\abs{\xi}^4+\tau^4)^{m/4}\sim \seq{\xi,\tau}^m,
$$
we deduce that
\begin{align}\label{2.9}
C_1\Vert  u\Vert_{H^{s}_{\tau}(\mathbb{R}^n)}\leq \Vert (\Delta^2+\tau^4)^{m/4} u\Vert_{H^{s-m}_{\tau}(\mathbb{R}^n)}\leq C_2\Vert u \Vert_{H^s_{\tau}(\mathbb{R}^n)}\quad s \in \R,
\end{align}
where $C_1,C_2$ positive constants independent of $\tau$. 
\medskip


In what follows, we need the following Green's Formula
\begin{lemma}\label{L2.1}
Let $B \in W^{1,\infty}(\Omega)$ and $q \in  L^\infty(\Omega)$. Then for any two functions $u$ and $\phi$ lying in $H^4(\Omega)$, the following equality holds true
\begin{align}\label{2.10}
(\mathcal{H}_{B,q}u,\phi)-(u,\mathcal{H}_{B,q}\phi)=\seq{\gamma_N(u),\gamma_D(\phi)}-\seq{\gamma_D(u),\gamma_N(\phi)},
\end{align}
where $\left(\cdot , \cdot \right)$ and $\left<\cdot , \cdot
\right>$ denote the scalar product, respectively, in $L^2(\Omega)$ 
and $L^2(\Gamma)\times L^2(\Gamma)$, and $ds$ stands for the Euclidean surface measure on $\Gamma$.
\end{lemma}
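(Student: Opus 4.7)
The plan is to decompose $\mathcal{H}_{B,q}=(-\Delta)^{2}+\mathcal{A}_{B}+q$, where $\mathcal{A}_{B}:=-2iB\cdot\nabla-i\,\mathrm{div}\,B$, and to treat the three contributions to $(\mathcal{H}_{B,q}u,\phi)-(u,\mathcal{H}_{B,q}\phi)$ separately. Since $q$ is real and the $L^{2}(\Omega)$ inner product is conjugate-linear in the second slot, $(qu,\phi)=(u,q\phi)$, so the zero-order term cancels outright. The biharmonic and magnetic parts will each yield boundary integrals; regrouping them should reproduce exactly $\langle\gamma_{N}(u),\gamma_{D}(\phi)\rangle-\langle\gamma_{D}(u),\gamma_{N}(\phi)\rangle$, with the extra summand $-i(B\cdot\nu)u$ built into $\gamma_{N}$ precisely absorbing the boundary contribution produced by the magnetic part. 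This matching is really the content of the lemma.

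For the biharmonic piece I would apply the classical Green second identity $\int_{\Omega}(f\Delta g-g\Delta f)\,dx=\int_{\Gamma}(f\partial_{\nu}g-g\partial_{\nu}f)\,d\sigma$ twice: first with $(f,g)=(\bar\phi,\Delta u)$ and then with $(f,g)=(u,\Delta\bar\phi)$, and subtract. Since $u,\phi\in H^{4}(\Omega)$ the trace theorem puts $u,\partial_{\nu}u,\Delta u,\partial_{\nu}\Delta u$ and their $\phi$-counterparts in $L^{2}(\Gamma)$, so everything is rigorous. The outcome is
\begin{equation*}
(\Delta^{2}u,\phi)-(u,\Delta^{2}\phi)=\int_{\Gamma}\bigl(\partial_{\nu}\Delta u\,\bar\phi-\Delta u\,\overline{\partial_{\nu}\phi}-u\,\overline{\partial_{\nu}\Delta\phi}+\partial_{\nu}u\,\overline{\Delta\phi}\bigr)\,d\sigma,
\end{equation*}
and these four terms are exactly the ``purely biharmonic'' pieces of the right-hand side in \eqref{2.10}.

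For the magnetic piece I would exploit that $B$ and $\mathrm{div}\,B$ are real, hence $\overline{\mathcal{A}_{B}\phi}=2iB\cdot\nabla\bar\phi+i\,\mathrm{div}\,B\,\bar\phi$, to obtain the pointwise identity
\begin{equation*}
(\mathcal{A}_{B}u)\bar\phi-u\,\overline{\mathcal{A}_{B}\phi}=-2i\bigl[(B\cdot\nabla u)\bar\phi+u\,B\cdot\nabla\bar\phi+(\mathrm{div}\,B)\,u\bar\phi\bigr]=-2i\,\mathrm{div}(u\bar\phi\,B).
\end{equation*}
The divergence theorem then gives $(\mathcal{A}_{B}u,\phi)-(u,\mathcal{A}_{B}\phi)=-2i\int_{\Gamma}(B\cdot\nu)u\bar\phi\,d\sigma$. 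Adding this to the biharmonic boundary expression above and regrouping, the two conjugate copies of $-i(B\cdot\nu)u$ sitting inside $\gamma_{N}(u)$ (paired with $\phi$) and $-i(B\cdot\nu)\phi$ inside $\gamma_{N}(\phi)$ (paired with $u$, with opposite sign) together contribute $-2i\int_{\Gamma}(B\cdot\nu)u\bar\phi\,d\sigma$ to $\langle\gamma_{N}(u),\gamma_{D}(\phi)\rangle-\langle\gamma_{D}(u),\gamma_{N}(\phi)\rangle$, exactly balancing the magnetic boundary term. No genuine obstacle is expected: the proof is essentially a careful sign and conjugation bookkeeping, and the reality of $B$, $\mathrm{div}\,B$ and $q$ is what makes this bookkeeping close cleanly.
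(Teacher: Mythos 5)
Your proof is correct: the decomposition into biharmonic, magnetic and zero--order parts, the double application of Green's second identity, the divergence identity $(\mathcal{A}_{B}u)\bar\phi-u\,\overline{\mathcal{A}_{B}\phi}=-2i\,\mathrm{div}(u\bar\phi\,B)$, and the bookkeeping showing that the two copies of the term $-i(B\cdot\nu)u$ built into $\gamma_N$ absorb the resulting boundary integral $-2i\int_{\Gamma}(B\cdot\nu)u\bar\phi\,d\sigma$ all check out. The paper states Lemma \ref{L2.1} without proof, and your argument is precisely the standard one that is being taken for granted there (with the usual density/trace-theorem justification for working in $H^4(\Omega)$, which you correctly invoke).
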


\section{Asymptotic spectral analysis}
\label{sect3}
In what follows, we fix $q_\ell \in  L^\infty(\Omega)$ and $B_\ell \in W^{1,\infty}(\Omega)$, for $\ell=1,2$, such that $B_1=B_2$, in $\mathcal{O}$. As in the first section, we consider the operator $\mathcal{H}_{\ell}$ defined by \eqref{1.1} when $B=B_\ell$ and $q=q_\ell$, for $\ell=1,2$. We denote by $\rho (\mathcal{H}_{\ell})$ the resolvent set of $\mathcal{H}_{\ell}$.

For $f=(f_0,f_1) \in H^{7/2,5/2}(\Gamma)$ and $\lambda \in \rho (\mathcal{H}_{\ell})$, $\ell=1,2$, we consider the following boundary value problem
\begin{align}\label{3.3}
\left\{ 
\begin{array}{ll} 
(\mathcal{H}_{\ell} -\lambda )u=0  &\textrm{in}\,\, \Omega ,\cr
\gamma_D(u) =f  &\textrm{on}\,\, \Gamma.
\end{array}
\right.
\end{align}
For notational convenience, the Dirichlet to Neumann map associated with \eqref{3.3} is denoted by
\begin{equation}
\Lambda_{\ell}(\lambda)=\Lambda_{B_\ell,q_\ell, \lambda},\quad \ell=1,2.
\end{equation}
Also, for $\lambda \in \rho (\mathcal{H}_{\ell})$, we denote by $R_{\ell}(\lambda):=(\mathcal{H}_{\ell}-\lambda )^{-1}$ the resolvent of $\mathcal{H}_{\ell} $ and recall the following classical resolvent estimate (see, for example, \cite{KT})
\begin{align}\label{3.2}
\norm{R_{\ell}(\lambda)}_{\mathscr{L}\para{L^2(\Omega)}}\leq \frac{1}{\abs{\textrm{Im}(\lambda)}},\quad\ell=1,2.
\end{align}

\medskip

One of the aim of this section is to examine the difference $(\Lambda_{1}(\lambda)-\Lambda_{2}(\lambda))f$, when the spectral parameter $\lambda$ goes to $-\infty$. For this purpose, we start by giving the following representation formula of $u_\ell$ in terms of spectral data.
\begin{lemma}\label{L.3.1}
Let $f=(f_0,f_1) \in H^{7/2,5/2}(\Gamma)$ and $\lambda \in \rho (\mathcal{H}_{\ell})$. Then the boundary value problem \eqref{3.3} has a unique solution $u_\ell(\lambda ) =u_\ell ^f(\lambda ) \in H^4(\Omega)$ given by the following series
\begin{align}\label{3.4}
u_\ell(\lambda )= \sum_{k \geq 1} \frac{\seq{ f,\gamma_N(\phi_{\ell,k})}}{\lambda_{\ell,k} - \lambda}  \phi_{\ell,k},
\end{align}
the convergence takes place in $L^2(\Omega)$. 
Moreover, for any neighborhood $\mathcal{O}$ of $\Gamma$ in  $\Omega$,  we have
\begin{align}\label{3.5}
\lim_{\lambda\to-\infty} \para{\|u_{\ell}(\lambda)\|_{L^2(\Omega)} +\norm{\nabla u_{\ell}(\lambda)}_{L^2(\Omega\setminus\mathcal{O} )}}= 0 .
\end{align}
\end{lemma}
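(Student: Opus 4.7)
The plan is to treat the three assertions in turn---solvability of \eqref{3.3}, the spectral expansion \eqref{3.4}, and the two limits in \eqref{3.5}---relying on the self-adjointness of $\mathcal{H}_\ell$ and on Green's formula (Lemma~\ref{L2.1}).

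For the existence of a unique $u_\ell(\lambda)\in H^4(\Omega)$ solving \eqref{3.3} when $\lambda\in\rho(\mathcal{H}_\ell)$, I would pick any $w\in H^4(\Omega)$ with $\gamma_D(w)=f$ and set $u_\ell(\lambda)=w-R_\ell(\lambda)(\mathcal{H}_\ell-\lambda)w$, so that $u_\ell(\lambda)-w\in\mathcal{D}(\mathcal{H}_\ell)$ and $(\mathcal{H}_\ell-\lambda)u_\ell(\lambda)=0$. To identify the Fourier coefficients $c_k:=(u_\ell(\lambda),\phi_{\ell,k})_{L^2(\Omega)}$ of the resulting $L^2$-function, apply Lemma~\ref{L2.1} to the pair $(u_\ell(\lambda),\phi_{\ell,k})$: using $\gamma_D(\phi_{\ell,k})=0$, $\mathcal{H}_\ell\phi_{\ell,k}=\lambda_{\ell,k}\phi_{\ell,k}$ with $\lambda_{\ell,k}\in\mathbb{R}$, and $(\mathcal{H}_\ell-\lambda)u_\ell(\lambda)=0$, the identity collapses to $(\lambda-\lambda_{\ell,k})c_k=-\seq{f,\gamma_N(\phi_{\ell,k})}$, which yields \eqref{3.4} with $L^2$-convergence by Parseval.

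The $L^2$ limit in \eqref{3.5} would then follow from dominated convergence applied to the Parseval identity
\[
\|u_\ell(\lambda)\|_{L^2(\Omega)}^2=\sum_{k\ge 1}\frac{|\seq{f,\gamma_N(\phi_{\ell,k})}|^2}{(\lambda_{\ell,k}-\lambda)^2}.
\]
Fixing any $\lambda_0<\lambda_{\ell,1}$ gives $\lambda_0\in\rho(\mathcal{H}_\ell)$ and the elementary bound $(\lambda_{\ell,k}-\lambda)^2\ge(\lambda_{\ell,k}-\lambda_0)^2$ for every $k\ge 1$ and every $\lambda\le\lambda_0$, which provides a summable majorant equal to $\|u_\ell(\lambda_0)\|_{L^2(\Omega)}^2$, while each individual term tends to $0$ as $\lambda\to-\infty$.

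The gradient limit is the main obstacle: the naive elliptic bound $\|\nabla u_\ell\|_{L^2(\Omega\setminus\mathcal{O})}\le C(1+|\lambda|)\|u_\ell\|_{L^2(\Omega)}$ does not decay since $|\lambda|\,\|u_\ell\|_{L^2(\Omega)}$ need not vanish. I would instead exploit the coercivity introduced by the parameter $-\lambda\to+\infty$ via a Caccioppoli-type energy identity. Choose a nested family of cutoffs $\chi_j\in\mathcal{C}^\infty_c(\Omega)$ with $\chi_0\equiv 1$ on $\Omega\setminus\mathcal{O}$ and $\operatorname{supp}\chi_{j+1}\Subset\{\chi_j=1\}$, test $(\mathcal{H}_\ell-\lambda)u_\ell=0$ against $\chi_j^2 u_\ell$, and take real parts. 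Self-adjointness of $\mathcal{H}_\ell$ converts this into an identity of the form
\[
\|\chi_j\Delta u_\ell\|_{L^2}^2+|\lambda|\,\|\chi_j u_\ell\|_{L^2}^2\le C\bigl(\|u_\ell\|_{L^2(\operatorname{supp}\chi_j)}^2+\|\nabla u_\ell\|_{L^2(\operatorname{supp}\nabla\chi_j)}^2\bigr),
\]
once $-\lambda$ is large enough to absorb the lower-order $B,q$ contributions via Young's inequality. Iterating along the chain of cutoffs and using the interior interpolation $\|\nabla u\|_{L^2(K)}^2\le\|u\|_{L^2(K')}\|\Delta u\|_{L^2(K')}$ for $K\Subset K'\Subset\Omega$ produces a $\lambda$-uniform bound $\|\nabla u_\ell(\lambda)\|_{L^2(\Omega\setminus\mathcal{O})}\le C\|u_\ell(\lambda)\|_{L^2(\Omega)}$ for $-\lambda$ sufficiently large, and combining with the $L^2$ limit above closes the proof.
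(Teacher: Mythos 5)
Your treatment of existence/uniqueness, of the identification of the Fourier coefficients via Green's formula, and of the $L^2$ limit by dominated convergence coincides with the paper's proof. Where you genuinely diverge is the gradient estimate: the paper cuts off with a single $\kappa\in\mathcal{C}_0^\infty(\Omega)$, writes $(\Delta^2-\lambda)\tilde u_\ell=(2iB_\ell\cdot\nabla-V_\ell)\tilde u_\ell+Q_3u_\ell$ with $Q_3$ a third-order commutator, and applies $(\Delta^2-\lambda)^{-3/4}$ together with the parameter-dependent calculus \eqref{2.6}--\eqref{2.9} (with $\tau=|\lambda|^{1/4}$) to get $\norm{\tilde u_\ell}_{H^1}\lesssim|\lambda|^{-1/2}\norm{\tilde u_\ell}_{L^2}+\norm{u_\ell}_{L^2}$. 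Your Caccioppoli-type energy route is more elementary and avoids the pseudodifferential machinery entirely, which would be a real gain if it closed.

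As written, however, the iteration does not close. With your nesting $\operatorname{supp}\chi_{j+1}\Subset\{\chi_j=1\}$, the set $\operatorname{supp}\nabla\chi_j$ is contained in $\{\chi_j<1\}$ and is therefore \emph{disjoint} from $\operatorname{supp}\chi_{j+1}$; the $(j+1)$-st estimate gives no information on $\norm{\nabla u_\ell}_{L^2(\operatorname{supp}\nabla\chi_j)}$, so the right-hand side of the $j$-th inequality is never controlled. Reversing the nesting (each successive cutoff equal to $1$ on the support of the previous one) only trades this for an infinite regress: each step produces an uncontrolled gradient term on a new annulus compactly contained in $\Omega$, and there is no a priori $\lambda$-uniform bound to terminate the chain (the global bound $\norm{u_\ell}_{H^4}\le C_\lambda\norm{f}$ degenerates as $\lambda\to-\infty$). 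The fix is to absorb rather than iterate: test against $\chi^4\bar u_\ell$ instead of $\chi_j^2 u_\ell$. Then every error term produced by $\nabla(\chi^4)=4\chi^3\nabla\chi$ and $\Delta(\chi^4)$ retains at least two powers of $\chi$, the interpolation $\norm{\chi\nabla u}_{L^2}^2\le\norm{\chi^2\Delta u}_{L^2}\norm{u}_{L^2}+C\norm{\chi\nabla u}_{L^2}\norm{u}_{L^2}$ lets you bound all cross terms and the $B\cdot\nabla$ term by $\epsilon\norm{\chi^2\Delta u}_{L^2}^2+C_\epsilon\norm{u}_{L^2}^2$, and a single application of Young's inequality yields $\norm{\chi^2\Delta u_\ell}_{L^2}^2+|\lambda|\norm{\chi^2u_\ell}_{L^2}^2\le C\norm{u_\ell}_{L^2(\Omega)}^2$ with $C$ independent of $\lambda$, whence $\norm{\nabla u_\ell}_{L^2(\Omega\setminus\mathcal{O})}\le C\norm{u_\ell}_{L^2(\Omega)}\to0$. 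With that modification your argument is complete and gives a self-contained alternative to the paper's Section \ref{sect2} symbol calculus for this particular lemma.
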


\begin{proof}
We start by proving the existence and uniqueness of the solution to \eqref{3.3}. By using the fact that $\gamma_D :  H^4(\Omega) \To H^{7/2,5/2}(\Gamma) $ is subjective, there exist $g \in H^4(\Omega)$ such that $\gamma_D (g)=f$. This thus shows that $u_\ell$ solves \eqref{3.3} if and only if $v_\ell:=u_\ell -g$ solves
\begin{align}\label{3.6}
\left\{ 
\begin{array}{ll} 
(\mathcal{H}_{\ell} -\lambda )v=F  &\textrm{in}\,\, \Omega ,\cr
\gamma_D(v) =0  &\textrm{on}\,\, \Gamma,
\end{array}
\right.
\end{align}
where $F=(\mathcal{H}_{\ell} -\lambda )g \in L^2(\Omega)$. Now using the fact that $\lambda \in \rho (\mathcal{H}_{\ell})$ we found out that the boundary value problem \eqref{3.6} admits a unique solution $v_\ell=(\mathcal{H}_{\ell} -\lambda  )^{-1}F$. This entails that \eqref{3.3} admits a unique solution $u_\ell(\lambda) \in H^4(\Omega)$. Moreover, $u_\ell(\lambda)$ can be written in the Hilbert basis $(\phi_{\ell,k})_k$ as
\begin{align}\label{3.7}
u_\ell(\lambda)=\sum_{k\geq 1}(u_\ell(\lambda),\phi_{\ell,k})\phi_{\ell,k}.
\end{align}
 On the other hand, by multiplying the first equation in \eqref{3.3} by $\overline{\phi_k}$ and after applying the Green formula \eqref{2.10}, we obtain
\begin{align}\label{3.8}
\seq{f,\gamma_N(\phi_{\ell,k})}=(\lambda_{\ell,k}-\lambda)\big(u_\ell(\lambda),\phi_{\ell,k}\big).
\end{align}
Hence, we get the expression \eqref{3.4}.
\smallskip
We prove now
$$
\lim_{\lambda\to-\infty} \|u_{\ell}(\lambda)\|^2_{L^2(\Omega)} = 0.
$$
Let $\lambda_0>0$ be large enough but fixed so that zero is not in the spectrum of the operator $(-\Delta)^2+\lambda_0$ and for $\lambda<-\lambda_0$, we have $\abs{\lambda-\lambda_{\ell,k}}^2\geq \abs{\lambda_0+\lambda_{\ell,k}}^2$ for all $k\geq 1$, then
\begin{align}\label{3.9}
\frac{\abs{\seq{f,\gamma_N(\phi_{\ell,k})}}^2}{\abs{\lambda-\lambda_{\ell,k}}^2}\leq \frac{\abs{\seq{f,\gamma_N(\phi_{\ell,k})}}^2}{\abs{\lambda_0+\lambda_{\ell,k}}^2},
\end{align}
so, by applying Lebesgue's dominated convergence as $\lambda\to-\infty$, we get our desired result.
\smallskip
We move now to prove \eqref{3.5}. Let us pick $\kappa\in\mathcal{C}_0^\infty(\Omega, \R)$  so that $\kappa=1$ on $\Omega\setminus \mathcal{O}$ and put $\tilde{u}_\ell(\lambda)=\kappa u_\ell(\lambda)$. Then $\tilde{u}_\ell(\lambda)$ is supported in $\Omega$ and solves
\begin{align}\label{3.10}
(\Delta^2-\lambda)\tilde{u}_\ell(\lambda)=\big(2iB_\ell\cdot\nabla-V_\ell(x)\big)\tilde{u}_\ell(\lambda)+Q_3u_\ell(\lambda),\quad \textrm{in}\,\,\R^n.
\end{align}
where $Q_3$ is a third order partial differential  operator with coefficients supported in $\Omega$ and $V_\ell=-i\dive B_\ell+q_\ell$, $\ell=1,2$. Applying $(\Delta^2-\lambda)^{-3/4}$ to the left and to the right-hand side in \eqref{3.10} we obtain the following identity
\begin{align}\label{3.11}
&(\Delta^2-\lambda)^{1/4}\tilde{u}_\ell(\lambda)\cr
&=(\Delta^2-\lambda)^{-3/4}\big(2iB_\ell\cdot\nabla-V_\ell(x)\big)\tilde{u}_\ell(\lambda)+(\Delta^2-\lambda)^{-3/4}\circ Q_3u_\ell(\lambda).
\end{align}
Then by using \eqref{2.9}, for $\tau=\abs{\lambda}^{1/4}$, and \eqref{2.6} we deduce that 
\begin{align}\label{3.12}
C_1\norm{\tilde{u}_\ell(\lambda)}_{H^1_{\tau}(\R^n)}\leq \frac{C}{\abs{\lambda}^{1/4}}\norm{\tilde{u}_\ell(\lambda)}_{H^{-1}_\tau (\R^n)}+\norm{u_\ell(\lambda)}_{L^2(\Omega)}.
\end{align}
Whence
\begin{align}\label{3.13}
\norm{\tilde{u}_\ell(\lambda)}_{H^1(\Omega)}\leq \frac{C}{\abs{\lambda}^{1/2}}\norm{\tilde{u}_\ell(\lambda)}_{L^2(\Omega)}+\norm{u(\lambda)}_{L^2(\Omega)},
\end{align}
implying \eqref{3.5}.
\end{proof}

We are now in position to prove that the two solutions to the boundary value problem \eqref{3.3} are close, when the spectral parameter $\lambda$ goes to $-\infty$. More precisely, the following lemma claims that the influence of the vector field $B_\ell$ and the potential $q_\ell$, $\ell=1,2$, is dimmed when $\lambda$ goes to $-\infty$.

\begin{lemma}\label{L3.2}
Let $B_\ell\in W^{1,\infty}(\Omega)$ such that $B_1=B_2$, in $\mathcal{O}$, and let $q_\ell \in L^{\infty}(\Omega)$, $\ell=1,2$. For $f \in H^{7/2,5/2}(\Gamma)$ and $\mu \in \rho (\mathcal{H}_{1})\cap \rho (\mathcal{H}_{2})$, we denote
\begin{equation}\label{3.14}
w_{1,2}(\mu) =u_{1}(\mu) -u_{2}(\mu) .
\end{equation}
Here $u_{\ell}(\mu)$ denotes the $H^4(\Omega)-$solution to \eqref{3.3} with $\lambda$ is substituted by $\mu$. Then  $w_{1,2}(\mu) $ converges to $0$ in $H^4(\Omega)$, when $\mu$ goes to $-\infty$. In particular, $\gamma_N(w_{1,2}(\mu) )$ converges to $0$ in $L^2(\Gamma)\times L^2(\Gamma)$, when $\mu$ goes to $-\infty$.
\end{lemma}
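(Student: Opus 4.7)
The plan is to show that $w:=w_{1,2}(\mu)$ solves a Dirichlet boundary value problem whose source term has $L^2$-norm tending to $0$ as $\mu\to-\infty$, and then to upgrade this smallness to an $H^4$-estimate by combining elliptic regularity with the resolvent bound for the self-adjoint operator $\mathcal{H}_1$.

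First I would derive the equation satisfied by $w$. Subtracting the two systems \eqref{3.3} and using the identity $\mathcal{H}_1-\mathcal{H}_2=-2i(B_1-B_2)\cdot\nabla-i\,\dive(B_1-B_2)+(q_1-q_2)$, I obtain $(\mathcal{H}_1-\mu)w=G\,u_2(\mu)$ in $\Omega$ with $\gamma_D(w)=0$ on $\Gamma$, where $G:=2i(B_1-B_2)\cdot\nabla+i\,\dive(B_1-B_2)-(q_1-q_2)$. Because $B_1=B_2$ on $\mathcal{O}$, the coefficients $B_1-B_2$ and $\dive(B_1-B_2)$ vanish on $\mathcal{O}$, so the first two terms of $G\,u_2(\mu)$ are supported in $\Omega\setminus\mathcal{O}$.

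Next I would show that $\|Gu_2(\mu)\|_{L^2(\Omega)}\to 0$. The first-order contribution is bounded by $\|B_1-B_2\|_{L^\infty}\|\nabla u_2(\mu)\|_{L^2(\Omega\setminus\mathcal{O})}$, which tends to $0$ by Lemma~\ref{L.3.1}; the remaining contributions are bounded respectively by $\|\dive(B_1-B_2)\|_{L^\infty}\|u_2(\mu)\|_{L^2(\Omega)}$ and $\|q_1-q_2\|_{L^\infty}\|u_2(\mu)\|_{L^2(\Omega)}$, both of which also tend to $0$ by the same lemma.

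The crucial step is then to pass from $L^2$-smallness of the source to $H^4$-smallness of $w$. Since $\mathcal{H}_1$ is self-adjoint and bounded below, for $\mu$ sufficiently negative the resolvent satisfies $\|R_1(\mu)\|_{L^2\to L^2}\le C|\mu|^{-1}$, whence $|\mu|\,\|w\|_{L^2(\Omega)}\le C\|Gu_2(\mu)\|_{L^2(\Omega)}$. Combining this with the standard a priori estimate $\|w\|_{H^4(\Omega)}\le C(\|\Delta^2 w\|_{L^2(\Omega)}+\|w\|_{L^2(\Omega)})$ for the biharmonic operator with homogeneous Dirichlet data, rewriting $\Delta^2 w=\mu w+Gu_2(\mu)+2iB_1\cdot\nabla w+i\,\dive B_1\cdot w-q_1w$ from the PDE, and using the interpolation inequality $\|w\|_{H^1}\le\varepsilon\|w\|_{H^4}+C_\varepsilon\|w\|_{L^2}$ with $\varepsilon$ small enough to absorb the resulting $H^1$-term, I obtain $\|w\|_{H^4(\Omega)}\le C\|Gu_2(\mu)\|_{L^2(\Omega)}\to 0$. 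The last statement then follows immediately from the continuity of $\gamma_N\colon H^4(\Omega)\to H^{1/2,3/2}(\Gamma)$ and the embedding $H^{1/2,3/2}(\Gamma)\hookrightarrow L^2(\Gamma)\times L^2(\Gamma)$. The main obstacle is this last step: the elliptic estimate naturally produces a term $|\mu|\|w\|_{L^2(\Omega)}$ which \emph{grows} with $|\mu|$, so $\mu$-uniform control of $\|w\|_{H^4(\Omega)}$ is only reached by coupling the a priori estimate with the sharp resolvent bound; the assumption $B_1=B_2$ on $\mathcal{O}$ is equally critical, since Lemma~\ref{L.3.1} provides $H^1$-decay of $u_\ell(\mu)$ only on the interior subset $\Omega\setminus\mathcal{O}$.
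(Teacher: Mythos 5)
Your proof is correct and follows essentially the same route as the paper: derive the homogeneous-Dirichlet problem $(\mathcal{H}_{1}-\mu)w=h(\mu)$ for $w$, show $\|h(\mu)\|_{L^2(\Omega)}\to 0$ using $B_1=B_2$ on $\mathcal{O}$ together with Lemma~\ref{L.3.1}, and bound $\|w\|_{H^4(\Omega)}$ by $C\|h(\mu)\|_{L^2(\Omega)}$ uniformly in $\mu$. The only (cosmetic) difference is in that last step: the paper invokes the continuous embedding of $\mathcal{D}(\mathcal{H}_{1})$ into $H^4(\Omega)$ applied to the eigenfunction expansion of $R_1(\mu)h(\mu)$, whereas you re-derive the same $\mu$-uniform bound more explicitly from the elliptic a priori estimate combined with the resolvent bound and an absorption argument.
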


\begin{proof}
By a simple computation one can easily see that $w_{1,2}(\mu)$ solves the following boundary problem
\begin{equation}\label{3.15}
\left\{ 
\begin{array}{ll} 
\left(\mathcal{H}_{1} - \mu \right)w_{1,2}(\mu)=h(\mu)  & \textrm{in}\,\, \Omega,
\\ 
\gamma_D(w_{1,2}(\mu)) = 0 &\textrm{on}\,\,  \Gamma,
\end{array}
\right.
\end{equation}
where $h(\mu)$ is given by
\begin{equation}\label{3.16}
h(\mu)=-2i(B_2-B_1)\cdot\nabla u_{2}(\mu)+\left(V_2-V_1\right)u_{2}(\mu),
\end{equation}
with 
$$
V_\ell=-i\dive B_\ell+q_\ell, \quad \ell=1,2.
$$ 
Since $h(\mu) \in L^2(\Omega)$ and $\mu \in \rho (\mathcal{H}_{1})$ then $w_{1,2}(\mu) \in \mathcal{D}(\mathcal{H}_{1})$ and we have
\begin{equation}\label{3.17}
w_{1,2}(\mu)=R_1(\mu) \circ  h(\mu) =\sum_{k\ge 0}\frac{(h(\mu),\phi_{1,k})}{(\lambda_{1,k}-\mu)}\phi_{1,k}.
\end{equation}
Recalling that $\mathcal{D}(\mathcal{H}_1)$ embedded continuously into $H^4(\Omega)$ then there exists a constant $C>0$ such that
\begin{equation}\label{3.18}
\norm{w_{1,2}(\mu)}^2_{H^4(\Omega)}\leq C\sum_{k \ge 1}\abs{\lambda_{1,k}-\mu}^2\abs{(w_{1,2}(\mu),\phi_{1,k})}^2\leq C\norm{h(\mu)}^2_{L^2(\Omega)}.
\end{equation}
Since we assumed that $B_1=B_2$ in $\mathcal{O}$, we get
\begin{equation}\label{3.19}
\norm{h(\mu)}^2_{L^2(\Omega)}\leq C(\norm{\nabla u_{2}(\mu)}_{L^2(\Omega\backslash\mathcal{O})}+\norm{u_{2}(\mu)}_{L^2(\Omega)})
\end{equation}
where $C$ independent of $\mu$. 
Using now Lemma \ref{L.3.1} we deduce 
\begin{equation}\label{3.20}
\lim_{\mu\to-\infty}\norm{w_{1,2}(\mu)}_{H^4(\Omega)}=0.
\end{equation}
Furthermore, since the trace map $\gamma_N :  H^4(\Omega) \To  H^{1/2,3/2}(\Gamma)$ is continuous we obtain the second result 
\begin{equation}\label{3.21}
\lim_{\mu\to-\infty}\norm{\gamma_N(w_{1,2}(\mu))}=0.
\end{equation}
The proof is complete.
\end{proof}

The second aim of this section is to express $\Lambda_\ell(\lambda)f$ in terms of spectral parameter $\lambda$ and the boundary spectral data $\{(\lambda_{\ell,k},\gamma_N(\phi_{\ell,k})),\, k\in\mathbb N^*\}$, for $\ell=1,2$. Notice that the series \eqref{3.4} converges only in $L^2(\Omega)$ then we cannot deduce from \eqref{3.4} an representation formula of $ \gamma_N(u_\ell(\lambda))$ in terms of $\lambda$ and the boundary spectral data of $\mathcal{H}_{\ell}$, for $\ell=1,2$. For this purpose, following \cite{KKS}, we introduce an additional spectral parameter $\mu$ and we consider $w_\ell(\lambda, \mu):=u_\ell(\lambda)-u_\ell(\mu)$. Then we have the following lemma which is useful in the sequel.

\begin{lemma}\label{L3.3}

For $f=(f_0,f_1) \in H^{7/2,5/2}(\Gamma)$ and $\lambda, \mu \in \rho (\mathcal{H}_{\ell})$. We denote $w_\ell(\lambda, \mu):=u_\ell(\lambda)-u_\ell(\mu)$, where $u_\ell(\lambda)$ and $u_\ell(\mu)$ be the $H^4(\Omega)-$solution of \eqref{3.3} associated with $\lambda$ and $\mu$, respectively. Then the following identity holds true
\begin{align}\label{3.22}
\gamma_N(w_\ell(\lambda ,\mu)) = 
\sum_{k \geq 1} \frac{(\lambda-\mu) \seq{ f,\gamma_N(\phi_{\ell,k})}}{(\lambda-\lambda_{\ell,k} )(\mu-\lambda_{\ell,k})} \gamma_N( \phi_{\ell,k}).
\end{align}
Moreover, the convergence of series \eqref{3.22} takes place in $H^{1/2,3/2}(\Gamma)$.
\end{lemma}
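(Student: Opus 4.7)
The plan is to obtain \eqref{3.22} by term-wise application of $\gamma_N$ to the Fourier expansion of $w_\ell(\lambda,\mu)$, and the key step will be to show that this expansion converges not merely in $L^2(\Omega)$ (which is all that Lemma \ref{L.3.1} gives for each of $u_\ell(\lambda)$ and $u_\ell(\mu)$ separately) but in $H^4(\Omega)$, so that the continuous trace operator $\gamma_N:H^4(\Omega)\to H^{1/2,3/2}(\Gamma)$ can be pulled inside the sum.

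First I would set $w:=w_\ell(\lambda,\mu)=u_\ell(\lambda)-u_\ell(\mu)$, observe that $\gamma_D(w)=f-f=0$, and combine the two equations $(\mathcal{H}_\ell-\lambda)u_\ell(\lambda)=0$ and $(\mathcal{H}_\ell-\mu)u_\ell(\mu)=0$ to get the identity $(\mathcal{H}_\ell-\lambda)w=(\lambda-\mu)u_\ell(\mu)$. Since $\lambda\in\rho(\mathcal{H}_\ell)$ and the right-hand side is in $L^2(\Omega)$, this forces $w\in\mathcal{D}(\mathcal{H}_\ell)$ and $w=(\lambda-\mu)R_\ell(\lambda)u_\ell(\mu)$. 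Plugging in the series \eqref{3.4} for $u_\ell(\mu)$ and using the spectral identity $R_\ell(\lambda)\phi_{\ell,k}=(\lambda_{\ell,k}-\lambda)^{-1}\phi_{\ell,k}$ (or equivalently subtracting the two $L^2$-expansions of $u_\ell(\lambda)$ and $u_\ell(\mu)$ term-wise), I obtain
\begin{equation*}
w=\sum_{k\geq 1} c_k\,\phi_{\ell,k},\qquad c_k:=\frac{(\lambda-\mu)\seq{f,\gamma_N(\phi_{\ell,k})}}{(\lambda-\lambda_{\ell,k})(\mu-\lambda_{\ell,k})},
\end{equation*}
which already converges in $L^2(\Omega)$ by Parseval, since $(c_k)\in\ell^2$.

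The second step is to upgrade this to convergence in $H^4(\Omega)$. Because $\mathcal{D}(\mathcal{H}_\ell)$ embeds continuously into $H^4(\Omega)$, the graph norm dominates the $H^4$ norm, so it is enough to estimate $\|\mathcal{H}_\ell S_N-\mathcal{H}_\ell S_M\|_{L^2}$ for the partial sums $S_N=\sum_{k\leq N}c_k\phi_{\ell,k}$, or equivalently the tail of
\begin{equation*}
\|\mathcal{H}_\ell w\|_{L^2(\Omega)}^2=\sum_{k\geq 1}\lambda_{\ell,k}^{2}\,|c_k|^2\;\leq\;|\lambda-\mu|^2\;\sup_{k\geq 1}\frac{\lambda_{\ell,k}^{2}}{|\lambda-\lambda_{\ell,k}|^{2}}\;\sum_{k\geq 1}\frac{|\seq{f,\gamma_N(\phi_{\ell,k})}|^2}{|\mu-\lambda_{\ell,k}|^{2}}.
\end{equation*}
The supremum is finite since $\lambda_{\ell,k}\to+\infty$ stays at positive distance from $\lambda\in\rho(\mathcal{H}_\ell)$, and the remaining sum equals $\|u_\ell(\mu)\|_{L^2(\Omega)}^2$ by Parseval applied to \eqref{3.4}. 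Hence the partial sums are Cauchy in the graph norm, converge in $H^4(\Omega)$, and their limit in $H^4$ coincides with $w$ (both agree in $L^2$). Applying the continuous trace $\gamma_N$ term-by-term then delivers \eqref{3.22} with convergence in $H^{1/2,3/2}(\Gamma)$.

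The main obstacle is exactly this upgrade from $L^2$ to $H^4$ convergence: the series for $u_\ell(\lambda)$ or $u_\ell(\mu)$ alone does not converge in $H^4$ in general, so $\gamma_N$ cannot be applied term-wise to either one. The trick is that the \emph{difference} $w$ carries zero Dirichlet trace and hence lies in $\mathcal{D}(\mathcal{H}_\ell)$, while the algebraic identity $(\lambda-\lambda_{\ell,k})^{-1}-(\mu-\lambda_{\ell,k})^{-1}=(\mu-\lambda)/[(\lambda-\lambda_{\ell,k})(\mu-\lambda_{\ell,k})]$ produces an additional $|\lambda_{\ell,k}|^{-1}$ decay that precisely compensates the $|\lambda_{\ell,k}|$ weight required by the graph norm, turning an $\ell^2$ condition on $\seq{f,\gamma_N(\phi_{\ell,k})}/(\mu-\lambda_{\ell,k})$ into the $\ell^2$ condition needed on $\lambda_{\ell,k}c_k$.
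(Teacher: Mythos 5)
Your proposal is correct and follows essentially the same route as the paper: write $w_\ell(\lambda,\mu)=(\lambda-\mu)R_\ell(\lambda)u_\ell(\mu)$ using that $\gamma_D(w_\ell)=0$ forces $w_\ell\in\mathcal{D}(\mathcal{H}_\ell)$, expand in the eigenbasis, observe that the resulting series converges in $\mathcal{D}(\mathcal{H}_\ell)\hookrightarrow H^4(\Omega)$, and apply the continuous trace $\gamma_N$ term by term together with the identity $\seq{f,\gamma_N(\phi_{\ell,k})}=(\lambda_{\ell,k}-\mu)(u_\ell(\mu),\phi_{\ell,k})$. Your explicit graph-norm estimate merely spells out what the paper subsumes in the continuity of $R_\ell(\lambda):L^2(\Omega)\to\mathcal{D}(\mathcal{H}_\ell)$.
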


\begin{proof}
By a simple computation one can easily see that $w_\ell(\lambda ,\mu)$ solves
\begin{align}\label{3.23}
\left\{ 
\begin{array}{ll} 
(\mathcal{H}_{\ell} -\lambda )w_\ell(\lambda ,\mu)= (\lambda-\mu) u_\ell(\mu) &\textrm{in}\,\, \Omega ,\cr
\gamma_D(w_\ell(\lambda ,\mu)) =0  &\textrm{on}\,\, \Gamma.
\end{array}
\right.
\end{align}
Then by using the fact that $u_\ell \in H^4(\Omega)$ and $\lambda \in \rho (\mathcal{H}_{\ell})$ we can write $w_\ell(\lambda, \mu)$ as 
\begin{align}\label{3.24}
w_\ell(\lambda, \mu) =(\lambda-\mu) R_{\ell}(\lambda) \circ u_\ell(\mu) 
=(\lambda-\mu) \sum_{k\geq 1}\frac{\big(u_\ell(\mu),\phi_{\ell,k}\big)}{\lambda_{\ell,k}-\lambda}\phi_{\ell,k},  
\end{align}
where the convergence takes place in $\mathcal{D}(\mathcal{H}_{\ell})$. Moreover, since the operator $u \to \gamma_N(u)$ is continuous from $\mathcal{D}(\mathcal{H}_{\ell})$ into $H^{1/2,3/2}(\Gamma)$ then the series
\begin{align}
   \sum_{k\geq 1}\frac{\big(u_\ell(\mu),\phi_{\ell,k}\big)}{\lambda_{\ell,k}-\lambda} \gamma_N(\phi_{\ell,k}), 
\end{align}
converges in $H^{1/2,3/2}(\Gamma)$ and we get from \eqref{3.24} that 
\begin{align}
    \gamma_N\big( w_\ell(\lambda, \mu)\big) =(\lambda-\mu) \sum_{k\geq 1}\frac{\big(u_\ell(\mu),\phi_{\ell,k}\big)}{\lambda_{\ell,k}-\lambda}\gamma_N(\phi_{\ell,k}).
\end{align}
This and \eqref{3.8}, when $\lambda$ is substituted by $\mu$, complete the proof.
\end{proof}


\section{Isozaki's asymptotic representation formula}

\label{sect4}

In \cite{IH}, Isozaki introduces a simple representation formula related the difference of potential $q=q_1-q_2$ to the boundary spectral data of Schr\"odinger operator. In this section, we will extend this strategy to bi-harmonic operator with first order perturbation.

\subsection{Representation formula}

For $\xi \in \mathbb{R}^n$ fixed and for every $\tau \geq\seq{\xi}$, we set $\lambda_\tau:= (\tau+ i).$
Let $\omega\in\s^{n-1}$ such that $\xi\cdot\omega=0$, we define
\begin{equation}\label{4.2}
\beta_\tau := \sqrt{1-\frac{|\xi|^2}{4\tau^2}}, \quad \omega_{\ell,\tau} := \beta_\tau \omega +(-1)^\ell \frac{\xi}{2\tau}\quad\text{and}\quad \varrho_{\ell,\tau}=\lambda_\tau\omega_{\ell,\tau},
\end{equation}
in such a way that 
$$| \omega_{\ell,\tau}|=1, ~~\varrho_{\ell,\tau}\cdot\varrho_{\ell,\tau}=\lambda_\tau^2,~~ \text{for} ~~\ell=1,2,\quad \text{and} ~~\lim_{\tau\to+\infty} (\varrho_{1,\tau}-{\varrho_{2,\tau}})=-\xi.$$ 
We also set the two following functions
\begin{align}\label{4.3} 
\w(x)&=e^{i\varrho_{1,\tau}\cdot x},\\
\ww(x)&=e^{i\overline{\varrho_{2,\tau}}\cdot x} a_2(x,\tau),\label{4..3}
\end{align}
where $a_2 \in \mathcal{C}^\infty(\overline{\Omega}, \R)$ to be chosen later such that
\begin{align}\label{4.3a}
\big((-\Delta)^2-\overline{\lambda_\tau ^4}\big)\ww=0.   
\end{align}
It is easy to see that $\varphi_{\ell,\tau}^\ast$ satisfy
 \begin{align}\label{4.4}
     \|\varphi_{1,\tau}^\ast\|_{L^2(\Omega)}\leq C\qquad \text{and} \quad\|\varphi_{2,\tau}^\ast\|_{L^2(\Omega)}\leq C\|a_2(\cdot,\tau)\|_{L^2(\Omega)},
 \end{align}
here $C$ is a positive constant independent of $\tau$. Let us denote $T_\tau(D):= 2i \omega_{2,\tau} \cdot \nabla$, by a simple computation we have
\begin{align}\label{4.3b}
   &e^{-i\overline{\varrho_{2,\tau}}\cdot x}\big[\big((-\Delta)^2-\overline{\lambda_\tau ^4}\big)\ww\big]\cr
   &= \Big(  (\Delta+\overline{\lambda_\tau}T_\tau)^2- 2 \overline{\lambda_\tau ^2} (\Delta+  \overline{\lambda_\tau}T_\tau) \Big) a_2\cr
   &= (-\Delta)^2 a_2 + 2 \overline{\lambda_\tau } (T_\tau \circ \Delta a_2)+ \overline{\lambda_\tau }^2 \bigr(-2\Delta a_2+T_\tau^2 a_2 \bigr)-2\overline{\lambda_\tau}^3 T_\tau a_2.
\end{align}
In order to get \eqref{4.3a}, we choose the function $a_2$ in  the following special form
\begin{align}\label{4.6a}
    a_2(x,\tau)=\alpha_0(x,\tau)+ (\overline{\lambda_\tau})^{-1}\alpha_1(x,\tau),
\end{align}
where $\alpha_j$, to be chosen later, in $\mathcal{C}^\infty(\overline{\Omega}, \R)$ such that $\limsup_{\tau \to +\infty } |D^\alpha \alpha_j(\cdot, \tau)| <C|\xi|$, for $\alpha \in \N^n, ~|\alpha|\leq 4$ and $j=0,1$, and satisfy the following system of transport equations:
\begin{align}\label{4.3'}
  \left\{\begin{matrix}
T_\tau(D) \alpha_0&= 0,\\
T_\tau (D) \alpha_1 &= - \Delta \alpha_0,\\
\Delta \alpha_1&=0.
\end{matrix}\right.
\end{align}
Having chosen $a_2$ in this way, we obtain from \eqref{4.3b} the following identity
\begin{align}\label{4.11'}
   &e^{-i\overline{\varrho_{2,\tau}}\cdot x}\big[\big((-\Delta)^2-\overline{\lambda_\tau ^4}\big)\ww\big]\cr
&=(-\Delta)^2 \alpha_0+2 T_\tau  \circ \Delta \alpha_1 
+ \overline{\lambda_\tau} \big(2 T_\tau  \circ \Delta \alpha_0 -2 \Delta \alpha_1 + T_\tau ^2 \alpha_1 \big)
   \cr 
   &\quad
 +\overline{\lambda_\tau}^2 \big(-2 \Delta \alpha_0 + T_\tau ^2 \alpha_0- 2T_\tau \alpha_1\big)
+ \overline{\lambda_\tau}^3 (-2T_\tau \alpha_0)+(\overline{\lambda_\tau})^{-1} (-\Delta)^2 \alpha_1\cr 
   &=0.
\end{align}
Finally, we introduce 
\begin{align}\label{4.5} 
S_{\ell}(\tau,a_2)&=\seq{\Lambda_{\ell}(\lambda_\tau ^4) \gamma_D(\w),\gamma_D(\ww)},\\
\label{4.6}
\Phi_{\ell,\tau}(x)&=(\mathcal{H}_\ell-\lambda_\tau^4)\varphi_{1,\tau}^\ast(x)
\cr 
&=e^{i\varrho_{1,\tau}\cdot x}\Big(2\varrho_{1,\tau}\cdot B_\ell-i\dive B_\ell+q_\ell\Big),
\end{align}
and
\begin{align}
\label{4.6'}
\Tilde{\Phi}_{\ell,\tau}(x,a_2)&=(\mathcal{H}_\ell-\overline{\lambda_\tau}^4)\varphi_{2,\tau}^\ast(x)
\cr 
&=e^{i\overline{\varrho_{2,\tau}}\cdot x}(-2i B_\ell \cdot \nabla +2\overline{\varrho_{2,\tau}}\cdot B_\ell-i\dive B_\ell+q_\ell)a_2(x,\tau),
\end{align}
for $\ell=1,2$. It easy to see that 
\begin{align}\label{4.6''}
    &\|\Phi_{\ell,\tau}\|_{L^2(\Omega)}\leq C \tau,\\
    &\|\Tilde{\Phi}_{\ell,\tau}(\cdot, a_2)\|_{L^2(\Omega)}\leq C \tau\|a_2(\cdot,\tau)\|_{H^1(\Omega)},\label{4.6'''}
\end{align}
here $C$ is a positive constant independent of $\tau$.\smallskip\\
We have the following identity which is essential tool in the proof of our main results.

\begin{lemma}\label{L4.1} 
Let $B_\ell\in W^{1,\infty}(\Omega)$, $q_\ell \in L^{\infty}(\Omega)$ and $\varphi_{\ell,\tau}$ as above, $\ell=1,2$. Then we have
\begin{align}\label{4.7}
S_\ell(\tau, a_2)=& -\bigr(\varphi_{1,\tau}^\ast,\para{-2i B_\ell \cdot \nabla -i\dive B_\ell+q_\ell}\varphi_{2,\tau}^\ast\bigr)+\seq{\gamma_D(\varphi_{1,\tau}^\ast),\gamma_N(\varphi_{2,\tau}^\ast)}\cr
&\quad +\bigr(R_{\ell}(\lambda_\tau^4)\para{\Phi_{\ell,\tau}},\Tilde{\Phi}_{\ell,\tau}(\cdot, a_2)\bigr) .
\end{align}
Here we denoted by $R_{\ell}(\lambda_\tau^4)$ the resolvent of $(\mathcal{H}_{\ell}-\lambda_\tau^4)$.
\end{lemma}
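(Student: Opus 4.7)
The plan is to interpret the right--hand side as coming from Green's formula applied to a suitable pair, one factor being the solution of the Dirichlet problem with data $\gamma_D(\w)$, and then to eliminate that solution in favor of $\varphi_{1,\tau}^\ast$ modulo a resolvent term.

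First I would let $u_\ell\in H^4(\Omega)$ denote the unique solution of $(\mathcal{H}_\ell-\lambda_\tau^4)u_\ell=0$ in $\Omega$ with $\gamma_D(u_\ell)=\gamma_D(\w)$, which exists for $\tau$ large enough (so that $\lambda_\tau^4\in\rho(\mathcal{H}_\ell)$) by the discussion around \eqref{2.1}--\eqref{2.3}. By definition of the Dirichlet-to-Neumann map, $\Lambda_\ell(\lambda_\tau^4)\gamma_D(\w)=\gamma_N(u_\ell)$, so that
\begin{align*}
S_\ell(\tau,a_2)=\seq{\gamma_N(u_\ell),\gamma_D(\ww)}.
\end{align*}

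Next I would apply the Green formula \eqref{2.10} to the pair $(u_\ell,\ww)$, both of which lie in $H^4(\Omega)$. Since $\mathcal{H}_\ell u_\ell=\lambda_\tau^4 u_\ell$ and since the construction of $a_2$ via the transport system \eqref{4.3'} yields $((-\Delta)^2-\overline{\lambda_\tau^4})\ww=0$, as displayed in \eqref{4.11'}, we get
\begin{align*}
\mathcal{H}_\ell\ww=\overline{\lambda_\tau}^{\,4}\ww+\tilde{\Phi}_{\ell,\tau}(\cdot,a_2).
\end{align*}
Using the sesquilinearity of the $L^2$ inner product (conjugate--linear in the second argument), the two scalar contributions $\lambda_\tau^4(u_\ell,\ww)$ and $\overline{\overline{\lambda_\tau}^{\,4}}(u_\ell,\ww)=\lambda_\tau^4(u_\ell,\ww)$ cancel, and Green's identity reduces to
\begin{align*}
\seq{\gamma_N(u_\ell),\gamma_D(\ww)}=-(u_\ell,\tilde{\Phi}_{\ell,\tau}(\cdot,a_2))+\seq{\gamma_D(\w),\gamma_N(\ww)}.
\end{align*}

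Finally I would eliminate $u_\ell$. Setting $v_\ell:=u_\ell-\w$, we have $\gamma_D(v_\ell)=0$ and, by \eqref{4.6},
$(\mathcal{H}_\ell-\lambda_\tau^4)v_\ell=-\Phi_{\ell,\tau}\in L^2(\Omega)$, so that $v_\ell=-R_\ell(\lambda_\tau^4)\Phi_{\ell,\tau}$. Substituting $u_\ell=\w-R_\ell(\lambda_\tau^4)\Phi_{\ell,\tau}$ into the previous identity, and using the explicit formula for $\tilde{\Phi}_{\ell,\tau}$ given in \eqref{4.6'} to identify
$(\w,\tilde{\Phi}_{\ell,\tau}(\cdot,a_2))=\bigl(\w,(-2iB_\ell\cdot\nabla-i\dive B_\ell+q_\ell)\ww\bigr)$, one obtains \eqref{4.7}.

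This argument is essentially a bookkeeping exercise; the only point that requires care is the conjugation of $\overline{\lambda_\tau}^{\,4}$ against the antilinear slot of $(\cdot,\cdot)$, which is what makes the two spectral scalars cancel cleanly despite the fact that $\w$ and $\ww$ are built from different complex quasimomenta $\varrho_{1,\tau}$ and $\overline{\varrho_{2,\tau}}$. No size estimates are needed at this stage; the bounds \eqref{4.6''}--\eqref{4.6'''} will become relevant only later, when the remainder $(R_\ell(\lambda_\tau^4)\Phi_{\ell,\tau},\tilde{\Phi}_{\ell,\tau})$ is shown to vanish as $\tau\to\infty$.
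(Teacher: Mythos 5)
Your proposal is correct and follows essentially the same route as the paper: introduce the solution $u_\ell$ with Dirichlet data $\gamma_D(\w)$, write $u_\ell=\w-R_\ell(\lambda_\tau^4)\Phi_{\ell,\tau}$, apply Green's formula \eqref{2.10} to $(u_\ell,\ww)$, and use the cancellation $\overline{\overline{\lambda_\tau}^{\,4}}=\lambda_\tau^4$ in the antilinear slot together with $(\mathcal{H}_\ell-\overline{\lambda_\tau}^4)\ww=\tilde{\Phi}_{\ell,\tau}$. The only cosmetic difference is that the paper substitutes \eqref{4.6'} directly rather than spelling out the cancellation of the spectral scalars, which you rightly flag as the one point needing care.
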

\begin{proof} 
Let $u_\ell$ be the solution of the following boundary value problem
\begin{equation}\label{4.9}
\left\{ 
\begin{array}{ll} 
\left(\mathcal{H}_{\ell}-\lambda_\tau ^4\right) u_\ell=0\quad  &\textrm{in}\; \Omega ,\\ 
\gamma_D(u_\ell)=\gamma_D(\varphi_{1,\tau}^\ast) &\textrm{on}\; \Gamma.
\end{array}
\right.
\end{equation}
Let us write $u_\ell$ as $u_\ell=\varphi_{1,\tau}^\ast+v_\ell$, where $v_\ell$ is the solution of the boundary value problem
\begin{equation}\label{4.10}
\left\{ 
\begin{array}{ll} 
\left(\mathcal{H}_{\ell}-\lambda_\tau ^4\right) v_\ell= -\Phi_{\ell,\tau}(x)\quad  &\textrm{in}\; \Omega,\\ 
\gamma_D(v_\ell)=0 &\textrm{on}\; \Gamma.
\end{array}
\right.
\end{equation}
Then, we get
\begin{equation}\label{4.11}
u_\ell=\varphi_{1,\tau}^\ast-\left(\mathcal{H}_{\ell}-\lambda_\tau ^4\right)^{-1}(\Phi_{\ell,\tau})=\varphi_{1,\tau}^\ast-R_{\ell}(\lambda_\tau^4) \para{\Phi_{\ell,\tau}}.
\end{equation}
In addition, since
\begin{equation}\label{4.12}
S_{\ell}(\tau,a_2 ) =\seq{\gamma_N(u_\ell),\gamma_D(\varphi_{2,\tau}^\ast)},
\end{equation}
then by applying  the Green's formula \eqref{2.10} we get
\begin{align}\label{4.13}
S_{\ell}(\tau,a_2 )&=\para{\mathcal{H}_{\ell} u_\ell,\varphi_{2,\tau}^\ast}-\para{u_\ell,\mathcal{H}_{\ell}\varphi_{2,\tau}^\ast}+\seq{\gamma_D(u_\ell),\gamma_N(\varphi_{2,\tau}^\ast)}\cr
&=-\para{u_\ell,(\mathcal{H}_{\ell}-\overline{\lambda}_\tau^4)\varphi_{2,\tau}^\ast}+\seq{\gamma_D(\varphi_{1,\tau}^\ast),\gamma_N(\varphi_{2,\tau}^\ast)}.
\end{align}
Substituting \eqref{4.6'} and \eqref{4.11} in \eqref{4.13} we obtain our desired identity.
\end{proof}

\subsection{Asymptotic behavior of the boundary representation formula}
The goal of this section is to determine the asymptotic behavior of $\tau^{\beta}\bigr(S_{1}(\tau,a_2)-S_{2}(\tau, a_2)\bigr)$ when the parameter $\tau$ goes to $+\infty$, here $\beta=-1,0$. In what follows, we stick with the same notations as in Section \ref{sect3} and we denote
$$
B=B_2-B_1,\quad \text{and} \quad q=q_2-q_1.
$$
We extend the vector field $B$ and the potential $q$ by zero outside $\Omega$. These extensions, still denoted by $B$ and $q$, respectively.\\
By using Lemma \ref{L4.1}, we end up by getting the following identity.
\begin{align}\label{4.14}
S_{1}(\tau,a_2)-S_{2}(\tau, a_2) =& \int_\Omega e^{-i \frac{\lambda_\tau}{\tau} \xi \cdot x}  \Big(2i B \cdot \nabla a_2+(2{\varrho_{2,\tau}}\cdot B+i\dive B+q)a_2\Big)dx\cr
& ~+\bigr(R_{1}(\lambda_\tau^4)\para{\Phi_{1,\tau}},\Tilde{\Phi}_{1,\tau}(\cdot, a_2)\bigr)-\bigr(R_{2}
(\lambda_\tau^4)\para{\Phi_{2,\tau}},\Tilde{\Phi}_{2,\tau}(\cdot, a_2)\bigr),
\end{align}
for all $a_2 \in \mathcal{C}^{\infty}(\overline{\Omega})$ given by \eqref{4.6a}.

\begin{lemma}\label{L4.2}  Let $\omega\in\s^{n-1}$, $\xi\in\R^n$ such that $\xi\cdot\omega=0$, and $a_2=1$. Then we have the following identity
\begin{equation}\label{4.15}
\lim_{\tau\to+\infty}\tau^{-1}\bigr({S_{1}(\tau,1)-S_{2}(\tau, 1)}\bigr)=2\int_{\Omega}e^{-ix\cdot\xi}\omega\cdot B(x)~dx.
\end{equation}
\end{lemma}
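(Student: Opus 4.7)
The plan is to specialize the representation (4.14) to the choice $a_2\equiv 1$, then to divide by $\tau$ and pass to the limit $\tau\to+\infty$ in each resulting piece. Since $\nabla 1 = 0$, identity (4.14) collapses to
\begin{align*}
S_{1}(\tau,1)-S_{2}(\tau,1)
= \int_\Omega e^{-i \frac{\lambda_\tau}{\tau}\xi\cdot x}\bigl(2\varrho_{2,\tau}\cdot B + i\,\dive B + q\bigr)\,dx + \mathcal R(\tau),
\end{align*}
where $\mathcal R(\tau)$ denotes the two resolvent inner products in (4.14). Note that $a_2\equiv 1$ trivially satisfies the transport system (4.3') (with $\alpha_0 = 1$ and $\alpha_1 = 0$), so condition (4.3a) is fulfilled and Lemma \ref{L4.1} applies.

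For the integral term, I would use $\lambda_\tau/\tau = 1+i\tau^{-1}\to 1$, so that $e^{-i(\lambda_\tau/\tau)\xi\cdot x} = e^{(\xi\cdot x)/\tau}e^{-i\xi\cdot x}$ converges uniformly on $\overline\Omega$ to $e^{-ix\cdot\xi}$ and remains uniformly bounded. From (4.2), $\tau^{-1}\varrho_{2,\tau} = (\lambda_\tau/\tau)\bigl(\beta_\tau\omega + \xi/(2\tau)\bigr)$ converges in $\R^n$ to $\omega$, while $\tau^{-1}(i\,\dive B + q)$ vanishes uniformly. Lebesgue's dominated convergence theorem then yields
\begin{align*}
\lim_{\tau\to+\infty}\tau^{-1}\int_\Omega e^{-i\frac{\lambda_\tau}{\tau}\xi\cdot x}\bigl(2\varrho_{2,\tau}\cdot B + i\,\dive B + q\bigr)\,dx = 2\int_\Omega e^{-ix\cdot\xi}\omega\cdot B(x)\,dx.
\end{align*}

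For the remainder, since $\|1\|_{H^1(\Omega)}\leq C$, estimates \eqref{4.6''} and \eqref{4.6'''} give $\|\Phi_{\ell,\tau}\|_{L^2(\Omega)},\|\tilde\Phi_{\ell,\tau}(\cdot,1)\|_{L^2(\Omega)}\leq C\tau$. The decisive point is that the resolvent is evaluated at $\lambda_\tau^4=(\tau+i)^4$; a binomial expansion gives $|\mathrm{Im}(\lambda_\tau^4)| = |4\tau^3-4\tau|\geq c\tau^3$ for $\tau$ large, so (3.2) yields $\|R_\ell(\lambda_\tau^4)\|_{\mathscr L(L^2(\Omega))}\leq C\tau^{-3}$. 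A Cauchy--Schwarz bound then gives $|\mathcal R(\tau)|\leq C\tau^{-3}\cdot(C\tau)(C\tau) = O(\tau^{-1})$, whence $\tau^{-1}\mathcal R(\tau)=O(\tau^{-2})\to 0$, and combining this with the preceding limit proves (4.15).

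Nothing in the argument is really delicate; the only ingredient to watch is the fourth power in the spectral parameter, because it is precisely the $\tau^{-3}$ gain from the resolvent at energy $\lambda_\tau^4$ that defeats the $\tau^2$ product of the two source bounds. The more substantial WKB amplitude construction prescribed by (4.3') will be needed only for the finer choices of $a_2$ used elsewhere in the paper, not here.
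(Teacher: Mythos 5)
Your proof is correct and follows essentially the same route as the paper: specialize the representation formula \eqref{4.14} to $a_2\equiv 1$ (which satisfies the transport system \eqref{4.3'}), control the resolvent remainder via $|\mathrm{Im}(\lambda_\tau^4)|=4\tau(\tau^2-1)\gtrsim\tau^3$ combined with the $O(\tau)$ bounds \eqref{4.6''}--\eqref{4.6'''}, and pass to the limit in the integral term by dominated convergence using $\tau^{-1}\varrho_{2,\tau}\to\omega$. No gaps.
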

\begin{proof}
Bearing in mind that ${Im(\lambda_\tau ^4)}=4\tau (\tau^2-1)$ then by the resolvent estimate \eqref{3.2}, we get
\begin{equation}\label{4.16}
\norm{R_{\ell}(\lambda_\tau ^4)}_{\mathscr{L}(L^2(\Omega))}\leq \frac{1}{|\textrm{Im}(\lambda_\tau^4)|}\leq\frac{C}{\tau^3},\quad  \ell=1,2.
\end{equation}
Here $C$ is a positive constant independent of $\tau$. This, \eqref{4.6''} and \eqref{4.6'''} imply 
\begin{equation}\label{4.19}
\abs{\bigr(R_{\ell}(\lambda_\tau^4)\para{\Phi_{\ell,\tau}},\Tilde{\Phi}_{\ell,\tau}(\cdot, a_2)\bigr)}\leq\frac{C}{\tau}\|a_2(\cdot,\tau)\|_{H^1(\Omega)},
\end{equation}
here $C$ is a positive constant independent of $\tau$. Let us choose $a_2(x,\tau)=1$, for $x \in \Omega$. It is clear that $a_2$ satisfy the system of transport equations \eqref{4.3'}. Then after multiplying \eqref{4.14} by $\tau^{-1}$ we get
\begin{align}
 & \tau^{-1}\bigr({S_{1}(\tau,1)-S_{2}(\tau, 1)}\bigr)\cr &=2\tau^{-1}{\lambda_\tau}\int_\Omega e^{-i \frac{\lambda_\tau}{\tau} \xi \cdot x} (\omega_{2,\tau}\cdot B)~dx+\tau^{-1}\int_\Omega e^{-i \frac{\lambda_\tau}{\tau} \xi \cdot x} (i\dive B+q)dx\cr
 &\quad+\tau^{-1}\bigr(R_{1}(\lambda_\tau^4)\para{\Phi_{1,\tau}},\Tilde{\Phi}_{1,\tau}(\cdot, 1)\bigr)-\tau^{-1}\bigr(R_{2}
(\lambda_\tau^4)\para{\Phi_{2,\tau}},\Tilde{\Phi}_{2,\tau}(\cdot, 1)\bigr) .
\end{align}
Since $B$, $\dive(B)$ and $q$ are lying in $L^1(\Omega)$ then using the dominated convergence theorem, as  $\tau\to+\infty$, together with \eqref{4.19} we obtain the excepted identity.
\end{proof}
\begin{lemma}\label{L4.3} 
Let $\omega\in\s^{n-1}$, $\xi\in\R^n$ such that $\xi\cdot\omega=0$, and $a_2=1$. Assume that $B= \nabla \psi$, where 
$\psi \in W^{2,+\infty}(\Omega, \R)$ satisfies $\psi=\abs{\nabla \psi}=0$, on $\Gamma$. Then we have the following identity
\begin{equation}\label{4.20}
\lim_{\tau\to+\infty} \para{S_{1}(\tau,1)-S_{2}(\tau, 1)}=\int_\Omega e^{-ix\cdot\xi}q(x)dx.
\end{equation}
\end{lemma}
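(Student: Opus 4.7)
The plan is to start from the representation formula \eqref{4.14} specialized to $a_2=1$ (so the $2iB\cdot \nabla a_2$ term vanishes), and to exploit the hypothesis $B=\nabla\psi$ with $\psi$ and $\partial_\nu \psi$ vanishing on $\Gamma$ to integrate by parts. The resolvent remainders are harmless: by the resolvent bound \eqref{4.16} combined with \eqref{4.6''}--\eqref{4.6'''} and $\|1\|_{H^1(\Omega)}\leq C$,
\begin{equation*}
\bigl|\bigl(R_{\ell}(\lambda_\tau^4)\Phi_{\ell,\tau},\,\tilde\Phi_{\ell,\tau}(\cdot,1)\bigr)\bigr|\leq \frac{C}{\tau},
\end{equation*}
so both resolvent terms in \eqref{4.14} tend to $0$.

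The decisive step is to see that the two remaining contributions cancel up to the $q$-term. Write
\begin{equation*}
S_1(\tau,1)-S_2(\tau,1) = I_1(\tau)+I_2(\tau)+I_3(\tau)+O(\tau^{-1}),
\end{equation*}
with $I_1(\tau)=2\int_\Omega e^{-i(\lambda_\tau/\tau)\xi\cdot x}\,\varrho_{2,\tau}\cdot\nabla\psi\,dx$, $I_2(\tau)=i\int_\Omega e^{-i(\lambda_\tau/\tau)\xi\cdot x}\,\Delta\psi\,dx$, and $I_3(\tau)=\int_\Omega e^{-i(\lambda_\tau/\tau)\xi\cdot x}\,q(x)\,dx$. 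On $I_1$, integration by parts and the condition $\psi=0$ on $\Gamma$ yield
\begin{equation*}
I_1(\tau)= i\,\frac{\lambda_\tau}{\tau}\cdot 2(\varrho_{2,\tau}\cdot\xi)\int_\Omega e^{-i(\lambda_\tau/\tau)\xi\cdot x}\psi\,dx.
\end{equation*}
Using $\omega\cdot\xi=0$ and the definition $\varrho_{2,\tau}=\lambda_\tau(\beta_\tau\omega+\xi/(2\tau))$, one gets $\varrho_{2,\tau}\cdot\xi=\lambda_\tau|\xi|^2/(2\tau)$, hence $I_1(\tau)=i\,(\lambda_\tau/\tau)^2|\xi|^2\int_\Omega e^{-i(\lambda_\tau/\tau)\xi\cdot x}\psi\,dx$. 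For $I_2$, the two conditions $\psi=|\nabla\psi|=0$ on $\Gamma$ allow a double integration by parts without boundary contributions, giving $I_2(\tau)=-i\,(\lambda_\tau/\tau)^2|\xi|^2\int_\Omega e^{-i(\lambda_\tau/\tau)\xi\cdot x}\psi\,dx$. Therefore $I_1(\tau)+I_2(\tau)=0$ identically in $\tau$.

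It remains to pass to the limit in $I_3(\tau)$. Since $\lambda_\tau/\tau=1+i/\tau$, we have $|e^{-i(\lambda_\tau/\tau)\xi\cdot x}|=e^{\xi\cdot x/\tau}$, which is uniformly bounded on $\Omega$ for $\tau\geq 1$ and converges pointwise to $e^{-i\xi\cdot x}$; with $q\in L^\infty(\Omega)\subset L^1(\Omega)$, the dominated convergence theorem gives
\begin{equation*}
\lim_{\tau\to+\infty}I_3(\tau)=\int_\Omega e^{-i\xi\cdot x}q(x)\,dx,
\end{equation*}
which establishes \eqref{4.20}. The only delicate point is verifying the exact cancellation $I_1+I_2=0$, which hinges on using both boundary conditions $\psi=|\nabla\psi|=0$ on $\Gamma$ (the first suffices for $I_1$ but the second is essential to discard the boundary term $\int_\Gamma e^{-i(\lambda_\tau/\tau)\xi\cdot x}\partial_\nu\psi\,ds$ arising in $I_2$).
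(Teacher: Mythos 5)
Your proposal is correct and follows essentially the same route as the paper: specialize \eqref{4.14} to $a_2=1$, kill the resolvent remainders via \eqref{4.19}, integrate the $\varrho_{2,\tau}\cdot\nabla\psi$ and $\Delta\psi$ terms by parts using $\psi=\abs{\nabla\psi}=0$ on $\Gamma$ together with $\omega\cdot\xi=0$ so that they cancel exactly, and conclude by dominated convergence on the $q$-term. The only cosmetic difference is that the paper splits $2\varrho_{2,\tau}\cdot\nabla\psi$ into its $\omega$- and $\xi$-components before integrating by parts, whereas you keep it as one term and use $\varrho_{2,\tau}\cdot\xi=\lambda_\tau|\xi|^2/(2\tau)$; the computations are identical.
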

The extension by zero outside $\Omega$ of the function $\psi$ given by the previous Lemma is still denoted by the same letter.
\begin{proof} 
By substituting $B$ and $a_2$ in \eqref{4.14} we obtain the following identity
\begin{align*}
S_{1}(\tau,1)-S_{2}(\tau, 1) =& 2 {\lambda_\tau}\beta_\tau\int_\Omega e^{-i \frac{\lambda_\tau}{\tau} \xi \cdot x}  \omega\cdot \nabla \psi dx 
+{\lambda_\tau}\tau^{-1}\int_\Omega e^{-i \frac{\lambda_\tau}{\tau} \xi \cdot x}  \xi\cdot \nabla \psi dx \cr
&\quad+i\int_\Omega e^{-i \frac{\lambda_\tau}{\tau} \xi \cdot x} \Delta \psi dx
+\int_\Omega e^{-i \frac{\lambda_\tau}{\tau} \xi \cdot x} q(x)dx
\cr 
&\quad+\bigr(R_{1}(\lambda_\tau^4)\para{\Phi_{1,\tau}},\Tilde{\Phi}_{1,\tau}(\cdot, 1)\bigr)
-\bigr(R_{2}(\lambda_\tau^4)\para{\Phi_{2,\tau}},\Tilde{\Phi}_{2,\tau}(\cdot, 1)\bigr).
\end{align*}
Applying the integration by parts for the first term and the third term in the right side of the above equality and using the fact that $\omega \cdot \xi=0$ and $\psi=\abs{\nabla \psi}=0$ on $\Gamma$, we deduce that 
\begin{align}\label{4.22}
S_{1}(\tau,1)-S_{2}(\tau, 1) =& \int_\Omega e^{-i \frac{\lambda_\tau}{\tau} \xi \cdot x} q(x)dx+\bigr(R_{1}(\lambda_\tau^4)\para{\Phi_{1,\tau}},\Tilde{\Phi}_{1,\tau}(\cdot, 1)\bigr)\cr 
&\quad-\bigr(R_{2}(\lambda_\tau^4)\para{\Phi_{2,\tau}},\Tilde{\Phi}_{2,\tau}(\cdot, 1)\bigr).
\end{align}
Then, by sending $\tau$ to $+\infty$ in \eqref{4.22} and after using the estimate \eqref{4.19} we deduce that Lemma \ref{L4.3} is completely proved.
\end{proof}
For $\xi \in \mathbb{R}^n$ fixed and for every $\tau \geq\seq{\xi}$, we set 
\begin{align}\label{4.22'}
    \Tilde{\omega}_{2,\tau} := \beta_\tau {\xi}- \frac{|\xi|^2}{2\tau}\omega,
\end{align}
where $\beta_\tau$ is given by \eqref{4.2}. It is clear that $\Tilde{\omega}_{2,\tau} \cdot {\omega}_{2,\tau}=0$, here ${\omega}_{2,\tau}$ is giving by \eqref{4.2}. Let us choose 
\begin{align}\label{4.26a}
    \left\{\begin{matrix}
    \alpha_0(x,\tau)=(\Tilde{\omega}_{2,\tau}\cdot x)^2,\\
\alpha_1(x,\tau)=i\frac{\abs{\xi}^2}{\beta_\tau} (\omega\cdot x).
\end{matrix}\right.
\end{align}
It is easy to see that $\alpha_j$, for $j=0,1$, satisfy the system \eqref{4.3'}. We denote
\begin{align}\label{4.27a}
   a_2(x,\tau)&=(\Tilde{\omega}_{2,\tau}\cdot x)^2+i\frac{\abs{\xi}^2}{\beta_\tau\overline{\lambda_\tau}} (\omega\cdot x), 
\end{align}
then we have
\begin{align}\label{4.27b}
\lim_{\tau \to +\infty} a_2(x,\tau)=(\xi \cdot x)^2 \quad \text{and}~~ \lim_{\tau \to +\infty}  \|a_2(\cdot,\tau)\|^2_{H^1(\Omega)} \leq C |\xi|^4,
\end{align}
here $C$ is a positive constant independent of $\tau$ and $\xi$.
\begin{lemma}\label{L4.4'} 
Let $\omega\in\s^{n-1}$, $\xi\in\R^n$ such that $\xi\cdot\omega=0$, and $a_2$ given by \eqref{4.27a}. Assume that $B= \nabla \psi$, where 
$\psi \in W^{2,+\infty}(\Omega, \R)$ satisfies $\psi=\abs{\nabla\psi}=0$, on $\Gamma$. Then we have the following identity
\begin{align}\label{4.24'}
\lim_{\tau\to+\infty} \big(S_{1}(\tau,a_2)-S_{2}(\tau, a_2)\big)&=-4i|\xi|^2\int_\Omega e^{-ix\cdot\xi}\psi (x)dx+\int_\Omega e^{-ix\cdot\xi} q(x) (\xi \cdot x)^2dx.
\end{align}
\end{lemma}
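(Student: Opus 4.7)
The plan is to follow the template established by the proofs of Lemmas \ref{L4.2} and \ref{L4.3}: start from the representation formula \eqref{4.14} with the specific choice \eqref{4.27a}, substitute $B=\nabla\psi$, integrate by parts to move derivatives off $\psi$, and pass to the limit $\tau\to+\infty$.

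The first step is to observe that the resolvent contributions in \eqref{4.14} vanish in the limit: combining \eqref{4.19} with the uniform bound $\|a_2(\cdot,\tau)\|_{H^1(\Omega)}\le C|\xi|^2$ from \eqref{4.27b}, both terms $\bigl(R_1(\lambda_\tau^4)\Phi_{1,\tau},\tilde\Phi_{1,\tau}(\cdot,a_2)\bigr)$ and $\bigl(R_2(\lambda_\tau^4)\Phi_{2,\tau},\tilde\Phi_{2,\tau}(\cdot,a_2)\bigr)$ are $O(\tau^{-1})$. It therefore suffices to compute the limit of the volume integral
\[
I(\tau):=\int_\Omega e^{-i\frac{\lambda_\tau}{\tau}\xi\cdot x}\Bigl(2i\nabla\psi\cdot\nabla a_2+(2\varrho_{2,\tau}\cdot\nabla\psi+i\Delta\psi+q)a_2\Bigr)\,dx.
\]

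Next I would integrate by parts in each term containing $\nabla\psi$ or $\Delta\psi$. Since $\psi=|\nabla\psi|=0$ on $\Gamma$, all boundary contributions drop out. The integration by parts produces factors of $\nabla a_2$, $\Delta a_2$, $\omega_{2,\tau}\cdot\xi=\tfrac{|\xi|^2}{2\tau}$, and $\tilde\omega_{2,\tau}\cdot\xi=\beta_\tau|\xi|^2$. The design of $a_2$ in \eqref{4.27a} is tailored precisely so that the right algebraic identities hold: the orthogonality $\tilde\omega_{2,\tau}\cdot\omega_{2,\tau}=0$ kills $\omega_{2,\tau}\cdot\nabla((\tilde\omega_{2,\tau}\cdot x)^2)$, the identity $\omega_{2,\tau}\cdot\omega=\beta_\tau$ together with the $\overline{\lambda_\tau}^{-1}$ normalization produces the correct asymptotic coefficient, and $\Delta a_2=2|\tilde\omega_{2,\tau}|^2$ generates the constant term needed to cancel the $(\xi\cdot x)^2\psi$ leftover.

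The asymptotic coefficients are then computed by applying Lebesgue dominated convergence (using $\lambda_\tau/\tau\to 1$, $\beta_\tau\to1$, $\lambda_\tau/\overline{\lambda_\tau}\to 1$, $\tilde\omega_{2,\tau}\cdot x\to\xi\cdot x$, and $a_2\to(\xi\cdot x)^2$). Collecting the contributions of each of the four terms of $I(\tau)$, three of them produce combinations of the integrals $\int\psi e^{-ix\cdot\xi}dx$, $\int\psi(\xi\cdot x)e^{-ix\cdot\xi}dx$, and $\int\psi(\xi\cdot x)^2 e^{-ix\cdot\xi}dx$, while the fourth contributes $\int q(x)(\xi\cdot x)^2 e^{-ix\cdot\xi}dx$.

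The main obstacle is the bookkeeping in the previous paragraph: one must verify that the coefficients of $\int\psi(\xi\cdot x)e^{-ix\cdot\xi}dx$ and of $\int\psi(\xi\cdot x)^2 e^{-ix\cdot\xi}dx$ vanish identically, and that the coefficient of $\int\psi e^{-ix\cdot\xi}dx$ collapses to $-4i|\xi|^2$. These cancellations are exactly what the transport system \eqref{4.3'} is designed to enforce, and once they are checked, identity \eqref{4.24'} follows.
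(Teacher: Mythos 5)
Your proposal is correct and follows essentially the same route as the paper: substitute $B=\nabla\psi$ into \eqref{4.14}, integrate by parts onto $\psi$ using $\psi=|\nabla\psi|=0$ on $\Gamma$, exploit the identities $\tilde\omega_{2,\tau}\cdot\omega_{2,\tau}=0$, $\omega\cdot\omega_{2,\tau}=\beta_\tau$ and $\Delta a_2=2|\tilde\omega_{2,\tau}|^2=2|\xi|^2$, and kill the resolvent terms via \eqref{4.19} and \eqref{4.27b}. The paper carries out the bookkeeping you defer by reducing everything to the single integrand $-i\psi\left(\Delta a_2-2i\lambda_\tau\,\omega_{2,\tau}\cdot\nabla a_2\right)=-i\psi\left(2|\xi|^2+2\lambda_\tau\overline{\lambda_\tau}^{-1}|\xi|^2\right)$, whose limit gives the coefficient $-4i|\xi|^2$; the $(\xi\cdot x)$- and $(\xi\cdot x)^2$-weighted $\psi$ integrals never appear because their coefficients vanish identically, exactly as you anticipated.
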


\begin{proof} 
By substituting $B$ in \eqref{4.14} we obtain the following identity
\begin{align}\label{4.25'}
&S_{1}(\tau,a_2)-S_{2}(\tau,a_2) \cr 
&=2 \lambda_\tau \int_\Omega e^{-i \frac{\lambda_\tau}{\tau} \xi \cdot x}  ({\omega}_{2,\tau} \cdot \nabla \psi) a_2 ~dx
+2i \int_\Omega e^{-i \frac{\lambda_\tau}{\tau} \xi \cdot x}\nabla \psi \cdot \nabla a_2~dx
\cr 
&\quad+i\int_\Omega e^{-i \frac{\lambda_\tau}{\tau} \xi \cdot x}\Delta \psi a_2~dx
+\int_\Omega e^{-i \frac{\lambda_\tau}{\tau} \xi \cdot x}q(x) a_2 dx
\cr 
&\quad+\bigr(R_{1}(\lambda_\tau^4)\para{\Phi_{1,\tau}},\Tilde{\Phi}_{1,\tau}(\cdot, a_2)\bigr)-\bigr(R_{2}(\lambda_\tau^4)\para{\Phi_{2,\tau}},\Tilde{\Phi}_{2,\tau}(\cdot, a_2)\bigr).
\end{align}
So by using integration by parts for the third term in the right hand side of the above identity and the fact that $\abs{\nabla\psi} =\psi =0$, on $\Gamma$, we obtain
\begin{align}
&S_{1}(\tau,a_2)-S_{2}(\tau,a_2) \cr 
&=2 \lambda_\tau \beta_\tau \int_\Omega e^{-i \frac{\lambda_\tau}{\tau} \xi \cdot x}  ({\omega} \cdot \nabla \psi) a_2 ~dx
+i \int_\Omega e^{-i \frac{\lambda_\tau}{\tau} \xi \cdot x}\nabla \psi \cdot \nabla a_2~dx
\cr 
&\quad
+\int_\Omega e^{-i \frac{\lambda_\tau}{\tau} \xi \cdot x}q(x) a_2 dx
+\bigr(R_{1}(\lambda_\tau^4)\para{\Phi_{1,\tau}},\Tilde{\Phi}_{1,\tau}(\cdot, a_2)\bigr)
\cr 
&\quad-\bigr(R_{2}(\lambda_\tau^4)\para{\Phi_{2,\tau}},\Tilde{\Phi}_{2,\tau}(\cdot, a_2)\bigr).
\end{align}
Using again integration by parts for the two first terms in the right hand side of the above identity and the fact that $\omega \cdot \xi=0$ and $\psi =0$, on $\Gamma$, we get that
\begin{align}\label{4.26'}
&S_{1}(\tau,a_2)-S_{2}(\tau,a_2) \cr 
&= -i\int_\Omega e^{-i \frac{\lambda_\tau}{\tau} \xi \cdot x} \psi(x) (\Delta  a_2-2i \lambda_\tau {\omega}_{2,\tau} \cdot \nabla a_2) dx
+\int_\Omega e^{-i \frac{\lambda_\tau}{\tau} \xi \cdot x}q(x) a_2(x,\tau) dx\cr 
&\quad+\bigr(R_{1}(\lambda_\tau^4)\para{\Phi_{1,\tau}},\Tilde{\Phi}_{1,\tau}(\cdot, a_2)\bigr)-\bigr(R_{2}(\lambda_\tau^4)\para{\Phi_{2,\tau}},\Tilde{\Phi}_{2,\tau}(\cdot, a_2)\bigr).
\end{align}
Now by substituting $a_2$, we end up getting the following identity 
\begin{align}
&S_{1}(\tau,a_2)-S_{2}(\tau,a_2) \cr 
&= -i\int_\Omega e^{-i \frac{\lambda_\tau}{\tau} \xi \cdot x} \psi(x) (2|\xi|^2+2 \lambda_\tau (\overline{\lambda_\tau})^{-1}|\xi|^2 ) dx
+\int_\Omega e^{-i \frac{\lambda_\tau}{\tau} \xi \cdot x}q(x) a_2(x,\tau) dx\cr 
&\quad+\bigr(R_{1}(\lambda_\tau^4)\para{\Phi_{1,\tau}},\Tilde{\Phi}_{1,\tau}(\cdot, a_2)\bigr)-\bigr(R_{2}(\lambda_\tau^4)\para{\Phi_{2,\tau}},\Tilde{\Phi}_{2,\tau}(\cdot, a_2)\bigr).
\end{align}
Then by sending $\tau$ to $+\infty$ in the above identity and taking into account the inequality \eqref{4.19} with \eqref{4.27b}, we deduce that Lemma \ref{L4.4'} is completely proved.
\end{proof}

\section{Proof of the main result}
\label{sect5}
This section is devoted to the proof of Theorem \ref{Th1.1}. For this purpose, we first need to examine the asymptotic behavior of the boundary spectral data $\{(\lambda_{\ell,k},\gamma_N(\phi_{\ell,k})),\, k\in\mathbb N^*\}$ of $\mathcal{H}_\ell$. In what follows, we use the same notations as in the previous section.

\subsection{Asymptotic behavior of the boundary spectral data}

\begin{lemma}\label{L4.4} 
Let $\varphi_{\ell, \tau}^*$, for $\ell=1,2$, as above. Then, we have the two following inequality
\begin{equation}\label{4.23}
\sum_{k\ge 1} \Abs{\frac{\seq{\gamma_D(\varphi_{1,\tau}^*),\gamma_N(\phi_{\ell,k})}}{\lambda_{\ell,k}-\lambda_\tau^4 }}^2\le C_\ell,
\end{equation}
and 
\begin{equation}\label{4.24}
\sum_{k\ge 1} \Abs{\frac{\seq{\gamma_D(\varphi_{2,\tau}^*),\gamma_N(\phi_{\ell,k})}}{\lambda_{\ell,k}-\lambda_\tau^4 }}^2\leq C_\ell \|a_2(\cdot, \tau)\|_{H^1(\Omega)}^2 ,  
\end{equation}
with $C_\ell$ is a positive constant independent of $k$ and $\tau$.
\end{lemma}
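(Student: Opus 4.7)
The plan is to reduce both estimates to an $L^2(\Omega)$ bound on the Dirichlet solution $u_\ell(\lambda_\tau^4)$ of problem \eqref{3.3} with appropriate boundary data, via Parseval's identity. Indeed, combining \eqref{3.7} with relation \eqref{3.8} one obtains, for any $f\in H^{7/2,5/2}(\Gamma)$ and any $\lambda\in\rho(\mathcal{H}_\ell)$, the identity
\begin{align*}
\frac{\seq{f,\gamma_N(\phi_{\ell,k})}}{\lambda_{\ell,k}-\lambda}=\bigl(u_\ell(\lambda),\phi_{\ell,k}\bigr)_{L^2(\Omega)},
\end{align*}
so that Parseval in the orthonormal basis $(\phi_{\ell,k})_k$ gives
\begin{align*}
\sum_{k\geq 1}\left|\frac{\seq{f,\gamma_N(\phi_{\ell,k})}}{\lambda_{\ell,k}-\lambda}\right|^{2}=\|u_\ell(\lambda)\|_{L^2(\Omega)}^{2}.
\end{align*}
Specializing to $\lambda=\lambda_\tau^4$ and $f=\gamma_D(\varphi_{j,\tau}^{*})$ for $j=1,2$, the task reduces to controlling $\|u_\ell(\lambda_\tau^4)\|_{L^2(\Omega)}$.

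\textbf{First inequality.} Since $\varrho_{1,\tau}\cdot\varrho_{1,\tau}=\lambda_\tau^{2}$, the exponential $\w(x)=e^{i\varrho_{1,\tau}\cdot x}$ satisfies $((-\Delta)^{2}-\lambda_\tau^{4})\w=0$, so $(\mathcal{H}_\ell-\lambda_\tau^4)\w=\Phi_{\ell,\tau}$ by \eqref{4.6}. Writing $u_\ell=\w-R_\ell(\lambda_\tau^4)\Phi_{\ell,\tau}$ and invoking $\|\w\|_{L^2(\Omega)}\le C$ from \eqref{4.4}, the resolvent bound \eqref{4.16} together with $\|\Phi_{\ell,\tau}\|_{L^2(\Omega)}\le C\tau$ from \eqref{4.6''} yields $\|R_\ell(\lambda_\tau^4)\Phi_{\ell,\tau}\|_{L^2(\Omega)}\le C\tau^{-2}$, and hence $\|u_\ell\|_{L^2(\Omega)}\le C_\ell$.

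\textbf{Second inequality.} This is the more delicate case, since $\ww$ only solves $((-\Delta)^{2}-\overline{\lambda_\tau}^{4})\ww=0$ (see \eqref{4.3a}), not the equation with spectral parameter $\lambda_\tau^4$. Decomposing
\begin{align*}
(\mathcal{H}_\ell-\lambda_\tau^4)\ww=\Tilde{\Phi}_{\ell,\tau}(\cdot,a_2)+(\overline{\lambda_\tau}^{4}-\lambda_\tau^{4})\ww,
\end{align*}
with the shift $|\overline{\lambda_\tau}^{4}-\lambda_\tau^{4}|=2|\mathrm{Im}(\lambda_\tau^4)|\le C\tau^{3}$, the representation $u_\ell=\ww-R_\ell(\lambda_\tau^4)(\mathcal{H}_\ell-\lambda_\tau^4)\ww$ and the bounds \eqref{4.4}, \eqref{4.6'''}, \eqref{4.16} produce
\begin{align*}
\|u_\ell\|_{L^2(\Omega)}\le C\|a_2(\cdot,\tau)\|_{L^2(\Omega)}+\frac{C}{\tau^{3}}\bigl(\tau\|a_2(\cdot,\tau)\|_{H^1(\Omega)}+\tau^{3}\|a_2(\cdot,\tau)\|_{L^2(\Omega)}\bigr)\le C_\ell\|a_2(\cdot,\tau)\|_{H^1(\Omega)}.
\end{align*}
The gain $\tau^{-3}$ from the resolvent exactly absorbs the large $\tau^{3}$ correction coming from the mismatch between $\lambda_\tau^4$ and $\overline{\lambda_\tau}^{4}$, which is the main technical point of the argument. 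Squaring yields \eqref{4.24}.
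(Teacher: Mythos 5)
Your proof is correct, and the overall strategy (Parseval in the eigenbasis via \eqref{3.8}, then bounding $\|u_\ell\|_{L^2(\Omega)}$ through the quasi-mode representation $u_\ell=\varphi^*-R_\ell(\lambda_\tau^4)(\mathcal{H}_\ell-\lambda_\tau^4)\varphi^*$ and the resolvent estimate \eqref{4.16}) is exactly the paper's. The first inequality is handled identically.

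For the second inequality you take a slightly different route. The paper sidesteps the mismatch between $\lambda_\tau^4$ and $\overline{\lambda_\tau^4}$ entirely: it considers the boundary value problem \eqref{3.3} with spectral parameter $\overline{\lambda_\tau^4}$ and data $f=\gamma_D(\ww)$, for which $\ww$ is an exact quasi-mode, i.e. $(\mathcal{H}_\ell-\overline{\lambda_\tau^4})\ww=\Tilde{\Phi}_{\ell,\tau}(\cdot,a_2)$ with no correction term, and then uses that $\lambda_{\ell,k}\in\R$ implies $\abs{\lambda_{\ell,k}-\overline{\lambda_\tau^4}}=\abs{\lambda_{\ell,k}-\lambda_\tau^4}$, so the resulting Parseval identity bounds precisely the sum in \eqref{4.24}. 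You instead keep the parameter $\lambda_\tau^4$ and absorb the shift $(\overline{\lambda_\tau}^4-\lambda_\tau^4)\ww$ using the exact cancellation $\abs{\overline{\lambda_\tau^4}-\lambda_\tau^4}/\abs{\mathrm{Im}(\lambda_\tau^4)}=2$, which works and correctly identifies the only delicate point of your variant. The paper's choice is marginally cleaner (no correction term to estimate), while yours has the minor advantage of never changing the spectral parameter; both yield the same bound $C_\ell\|a_2(\cdot,\tau)\|_{H^1(\Omega)}^2$.
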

\begin{proof}  
Let $u_{\ell}(\lambda_\tau^4 )$ be the solution of the boundary value problem \eqref{3.3}, with $f=\gamma_D(\varphi_{1,\tau}^*)$ and $\lambda$ is substituted by $\lambda_\tau^4$. By Lemma \ref{L.3.1} $u_{\ell}(\lambda_\tau^4 )$ is given by the series
\begin{equation}\label{4.25}
u_{\ell}(\lambda_\tau^4 )=\sum_{k\ge 1} \frac{\seq{\gamma_D(\varphi_{1,\tau}^*),\gamma_N(\phi_{\ell,k})}}{\lambda_{\ell,k}-\lambda_\tau^4}\phi_{\ell,k}.
\end{equation}
Hence, by applying Parseval theorem we get from \eqref{4.11} that
\begin{align}\label{4.27}
\Abs{\frac{\seq{\gamma_D(\varphi_{1,\tau}^*),\gamma_N(\phi_{\ell,k})}}{\lambda_\tau^4-\lambda_{\ell,k}}}^2 &\le \norm{\varphi_{1,\tau}^*}_{L^2(\Omega)}^2+\norm{R_{\ell}(\lambda_\tau^4)(\Phi_{\ell,\tau})}_{L^{2}(\Omega)}^2, \quad \forall \tau >1.
\end{align}
Thus by using \eqref{4.4} and \eqref{4.16} together with \eqref{4.6''}, we get the first estimate. The second inequality \eqref{4.24} is proved similarly by considering $u_{\ell}(\overline{\lambda_\tau^4} )$ the solution of the boundary value problem \eqref{3.3}, with $f=\gamma_D(\varphi_{2,\tau}^*)$ and $\lambda$ is substituted by $\overline{\lambda_\tau^4}$.
\end{proof}
As in Section \ref{sect3}, for $f=(f_0,f_1)\in H^{7/2,5/2}(\Gamma)$ fixed  and  $\lambda, \mu\in  \rho (\mathcal{H}_{1})\cap \rho (\mathcal{H}_{2})$, we consider $u_{\ell}(\lambda )$ (resp. $u_{\ell}(\mu )$)  the solution of the boundary value problem \eqref{3.3} (resp. with $\lambda$ is substituted by $\mu$), for $\ell=1,2$, and we set
\begin{align}\label{4.31}
&w_{\ell}(\lambda ,\mu )=u_{\ell}(\lambda )-u_{\ell}(\mu ),\cr
& w_{1,2}(\mu )=u_{1}(\mu )-u_{2}(\mu ).
\end{align}
Let us denote
\begin{equation}\label{4.32}
\mathcal{K}(\tau ,\mu ,f)=\gamma_N( w_{1}(\lambda_\tau^4 ,\mu))-\gamma_N(w_{2}(\lambda_\tau^4 ,\mu)) \quad \textrm{on}\;\Gamma.
\end{equation}
Then, a simple application of Lemma \ref{L3.3} yields
\begin{equation}\label{4.33}
\mathcal{K}(\tau,\mu,f) =\sum_{k \ge 1} \left[\frac{(\lambda_\tau ^4-\mu )\seq{f,\gamma_N(\phi_{1,k})}}{(\lambda_\tau ^4 - \lambda_{1,k}) (\mu - \lambda_{1,k})} \gamma_N(\phi_{1,k})-\frac{(\lambda_\tau ^4-\mu )\seq{f,\gamma_N(\phi_{2,k})}}{(\lambda_\tau^4 - \lambda_{2,k})(\mu - \lambda_{2,k})} \gamma_N(\phi_{2,k})\right].
\end{equation}
We set also
\begin{equation}\label{4.34}
\mathcal{L}(\tau,\mu, a_2 )=\seq{\mathcal{K}(\tau,\mu, \gamma_D(\w)),\gamma_D(\varphi_{2,\tau}^*)}.
\end{equation}
Then, by using \eqref{4.33}, we get the following identity
\begin{align}\label{4.35}
\mathcal{L}(\tau,\mu, a_2 )=\sum_{k \ge 1}( \lambda_\tau ^4-\mu )\Big[\frac{\seq{\gamma_D(\w),\gamma_N(\phi_{1,k})}\seq{\gamma_N(\phi_{1,k}),\gamma_D(\ww)}}{ (\lambda_\tau^4 - \lambda_{1,k})(\mu - \lambda_{1,k})} \cr
 -\frac{\seq{\gamma_D(\w),\gamma_N(\phi_{2,k})}\seq{\gamma_N(\phi_{2,k}),\gamma_D(\ww)}}{ (\lambda_\tau^4 - \lambda_{2,k})(\mu - \lambda_{2,k})} \Big]. 
\end{align}
For $\tau >1$, we introduce
\begin{equation}\label{4.36}
\mathcal{L}^*(\tau,a_2 )=\sum_{k\ge 1}\big(\mathcal{L}^*_{1,k}(\tau, a_2)+\mathcal{L}^*_{2,k}(\tau, a_2)+\mathcal{L}^*_{3,k}(\tau, a_2)\big),
\end{equation}
where $\mathcal{L}^*_{1,k}$, $\mathcal{L}^*_{2,k}$ and $\mathcal{L}^*_{3,k}$ are giving by
\begin{align*}
\mathcal{L}^*_{1,k}(\tau, a_2)&=\frac{-\left\langle \gamma_D(\w),\gamma_N(\phi_{1,k}-\phi_{2,k}) \right\rangle \seq{ \gamma_N(\phi_{1,k}),\gamma_D(\ww)}}{ \lambda_\tau^4 - \lambda_{1,k}},\cr
\mathcal{L}^*_{2,k}(\tau, a_2)&=\frac{-\left\langle \gamma_D(\w),\gamma_N(\phi_{2,k}) \right\rangle \seq{ \gamma_N(\phi_{1,k}-\phi_{2,k}),\gamma_D(\ww)}}{ \lambda_\tau^4 - \lambda_{1,k}},\cr
\mathcal{L}^*_{3,k}(\tau, a_2)&=\left\langle \gamma_D(\w),\gamma_N(\phi_{2,k}) \right\rangle \seq{\gamma_N(\phi_{2,k}),\gamma_D(\ww)}\left(\frac{1}{ \lambda_\tau^4 - \lambda_{2,k}}-\frac{1}{\lambda_\tau^4 - \lambda_{1,k}} \right).
\end{align*}

\begin{lemma}\label{L4.5} 
We assume that the condition \eqref{1.13} is fulfilled. Then $\mathcal{L}(\tau,\mu, a_2 )$ converges to $\mathcal{L}^*(\tau,a_2 )$ as $\mu\to-\infty$. Moreover, we have
\begin{equation}\label{4.37} 
\lim_{\tau\to +\infty} |\mathcal{L}^*(\tau,a_2 )|\leq C(\lim_{\tau \to +\infty}\|a_2(\cdot,\tau)\|_{H^1(\Omega)}^2+1) \limsup_{k\to \infty}|\lambda_{1,k}-\lambda_{2,k}| .
\end{equation}
Here $C$ is a positive constant independent of $k$ and $\tau$, and $a_2 \in \mathcal{C}^{\infty}(\overline{\Omega})$ satisfy the assumptions \eqref{4.6a}
 \end{lemma}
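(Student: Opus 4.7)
My plan is to establish the two assertions in succession. Abbreviate $\alpha_{\ell,k}:=\seq{\gamma_D(\w),\gamma_N(\phi_{\ell,k})}$ and $\beta_{\ell,k}:=\seq{\gamma_N(\phi_{\ell,k}),\gamma_D(\ww)}$, so the generic summand of \eqref{4.35} is $(\lambda_\tau^4-\mu)\alpha_{\ell,k}\beta_{\ell,k}[(\lambda_\tau^4-\lambda_{\ell,k})(\mu-\lambda_{\ell,k})]^{-1}$. The partial-fraction identity
\begin{align*}
\frac{\lambda_\tau^4-\mu}{(\lambda_\tau^4-\lambda_{\ell,k})(\mu-\lambda_{\ell,k})}=\frac{1}{\mu-\lambda_{\ell,k}}-\frac{1}{\lambda_\tau^4-\lambda_{\ell,k}}
\end{align*}
splits each summand into a $\mu$-dependent piece that decays termwise to $0$ as $\mu\to-\infty$ and a $\mu$-free residual. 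Lebesgue's dominated convergence theorem, with a majorant supplied by Cauchy--Schwarz combined with Lemma \ref{L4.4}, transfers the termwise limit to the series. An algebraic rearrangement using $\alpha_{1,k}\beta_{1,k}-\alpha_{2,k}\beta_{2,k}=(\alpha_{1,k}-\alpha_{2,k})\beta_{1,k}+\alpha_{2,k}(\beta_{1,k}-\beta_{2,k})$ together with $(\lambda_\tau^4-\lambda_{2,k})^{-1}-(\lambda_\tau^4-\lambda_{1,k})^{-1}=(\lambda_{1,k}-\lambda_{2,k})[(\lambda_\tau^4-\lambda_{1,k})(\lambda_\tau^4-\lambda_{2,k})]^{-1}$ then identifies the $\mu$-free residual with $\sum_k(\mathcal{L}^*_{1,k}+\mathcal{L}^*_{2,k}+\mathcal{L}^*_{3,k})$, proving the first part.

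The quantitative estimate \eqref{4.37} rests on the key bound $|\mathrm{Im}(\lambda_\tau^4)|=4\tau(\tau^2-1)$, which gives $|\lambda_\tau^4-\lambda_{\ell,k}|\geq c\tau^3$ uniformly in $k$ and $\ell$. For $\mathcal{L}^*_{1,k}$ (and symmetrically $\mathcal{L}^*_{2,k}$), combining $|\alpha_{\ell,k}|\leq \|\gamma_D(\w)\|\cdot\|\gamma_N(\phi_{\ell,k})\|$ and $|\beta_{\ell,k}|\leq \|\gamma_D(\ww)\|\cdot\|\gamma_N(\phi_{\ell,k})\|$ with the trace bound $\|\gamma_N(\phi_{\ell,k})\|\leq Ck^{4/n}$ from \eqref{1.9}--\eqref{1.10} and the polynomial bounds $\|\gamma_D(\w)\|\leq C\tau$, $\|\gamma_D(\ww)\|\leq C\tau\|a_2(\cdot,\tau)\|_{H^1(\Omega)}$, I obtain
\begin{align*}
\sum_{k\geq 1}|\mathcal{L}^*_{1,k}|\leq \frac{C\tau^2\|a_2(\cdot,\tau)\|_{H^1(\Omega)}}{\tau^3}\sum_{k\geq 1}k^{4/n}\|\gamma_N(\phi_{1,k})-\gamma_N(\phi_{2,k})\|=O\bigl(\tau^{-1}\bigr),
\end{align*}
which vanishes as $\tau\to+\infty$ by the second hypothesis in \eqref{1.13}.

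The term $\mathcal{L}^*_{3,k}$ is the one that carries the $\limsup$ factor. For a cutoff $K\geq 1$, Cauchy--Schwarz on the tail $k\geq K$ yields
\begin{align*}
\sum_{k\geq K}|\mathcal{L}^*_{3,k}|\leq \sup_{k\geq K}|\lambda_{1,k}-\lambda_{2,k}|\cdot\Bigl(\sum_{k\geq 1}\tfrac{|\alpha_{2,k}|^2}{|\lambda_\tau^4-\lambda_{2,k}|^2}\Bigr)^{\!1/2}\Bigl(\sum_{k\geq 1}\tfrac{|\beta_{2,k}|^2}{|\lambda_\tau^4-\lambda_{1,k}|^2}\Bigr)^{\!1/2}.
\end{align*}
The first hypothesis of \eqref{1.13} gives $\sup_k|\lambda_{1,k}-\lambda_{2,k}|<\infty$, so the ratio $|\lambda_\tau^4-\lambda_{1,k}|/|\lambda_\tau^4-\lambda_{2,k}|\to 1$ uniformly in $k$ as $\tau\to+\infty$, and Lemma \ref{L4.4} then bounds the two factors by $C$ and $C\|a_2(\cdot,\tau)\|_{H^1(\Omega)}$ respectively. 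The finite head $\sum_{k<K}|\mathcal{L}^*_{3,k}|$ is $O(\tau^{-6})$, because each denominator is at least $c\tau^6$, so it vanishes as $\tau\to+\infty$. Sending $\tau\to+\infty$ first, then $K\to+\infty$, produces a bound of the form $C\,\limsup_{\tau}\|a_2(\cdot,\tau)\|_{H^1(\Omega)}\cdot\limsup_{k}|\lambda_{1,k}-\lambda_{2,k}|$; an application of $\|a_2\|_{H^1}\leq \tfrac12(\|a_2\|_{H^1}^2+1)$ then delivers \eqref{4.37}.

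The main technical obstacle is the head/tail splitting required to upgrade $\sup_k$ to $\limsup_k$ in the $\mathcal{L}^*_{3,k}$ estimate, together with the uniform-in-$k$ domination needed in Stage~1 for dominated convergence in $\mu$; both rest on the sharp lower bound $|\lambda_\tau^4-\lambda_{\ell,k}|\geq c\tau^3$ produced by the imaginary part of $\lambda_\tau^4$.
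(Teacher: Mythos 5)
Your overall strategy coincides with the paper's: the same three-way splitting into $\mathcal{L}^*_{1,k},\mathcal{L}^*_{2,k},\mathcal{L}^*_{3,k}$, the same use of Lemma \ref{L4.4} via Cauchy--Schwarz, the same lower bound $|\lambda_\tau^4-\lambda_{\ell,k}|\geq c\tau^3$ coming from $\mathrm{Im}(\lambda_\tau^4)=4\tau(\tau^2-1)$, and the same head/tail splitting in $k$ to convert $\sup_k$ into $\limsup_k$ for the $\mathcal{L}^*_{3,k}$ contribution. The second half of your argument is correct (the claim that the head $\sum_{k<K}|\mathcal{L}^*_{3,k}|$ is $O(\tau^{-6})$ should read $O(\tau^{-4})$, since the numerator carries $\|\gamma_D(\w)\|\,\|\gamma_D(\ww)\|\sim\tau^2$; this is harmless).

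The first half, however, contains a step that fails as written. You split each summand by the partial fraction identity into a piece with denominator $\mu-\lambda_{\ell,k}$ and a piece with denominator $\lambda_\tau^4-\lambda_{\ell,k}$, and propose to apply dominated convergence to the $\mu$-dependent piece. But with $\alpha_{\ell,k}=\seq{\gamma_D(\w),\gamma_N(\phi_{\ell,k})}$ and $\beta_{\ell,k}=\seq{\gamma_N(\phi_{\ell,k}),\gamma_D(\ww)}$ one only has $|\alpha_{\ell,k}\beta_{\ell,k}|\leq C\tau^2 k^{8/n}\sim C\tau^2\lambda_{\ell,k}^2$, while $|\mu-\lambda_{\ell,k}|^{-1}$ decays only like $\lambda_{\ell,k}^{-1}$; Lemma \ref{L4.4} controls these quotients only when the denominator appears to the \emph{second} power. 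So neither single-denominator series is absolutely convergent, the split of the (absolutely convergent, two-denominator) series \eqref{4.35} into two pieces is not legitimate, and no summable majorant for your $\mu$-piece exists from the stated tools. The repair is exactly the paper's order of operations: first perform the $\ell=1$ versus $\ell=2$ differencing and the rearrangement $\alpha_{1,k}\beta_{1,k}-\alpha_{2,k}\beta_{2,k}=(\alpha_{1,k}-\alpha_{2,k})\beta_{1,k}+\alpha_{2,k}(\beta_{1,k}-\beta_{2,k})$ on the full summand of \eqref{4.35} (keeping both denominators), which by hypothesis \eqref{1.13}, the bounds $|(\mu-\lambda_\tau^4)/(\mu-\lambda_{\ell,k})|\leq C(\tau)k^{4/n}$ and $\sup_k|(\lambda_\tau^4-\lambda_{\ell,k})/(\mu-\lambda_{\ell,k})|\leq C(\tau)$, and Lemma \ref{L4.4} yields three series converging absolutely and uniformly in $\mu<-\lambda_0-1$; only then pass to the termwise limit $(\lambda_\tau^4-\mu)/(\mu-\lambda_{\ell,k})\to -1$ as $\mu\to-\infty$. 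You have all the ingredients for this; the gap is in when you deploy them.
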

\begin{proof} 
We write $\mathcal{L}(\tau,\mu, a_2 )$ as
$$
\mathcal{L}(\tau,\mu, a_2)=\sum_{k\ge1}\big(\mathcal{L}_{1,k}(\mu,\tau, a_2)+ \mathcal{L}_{2,k}(\mu,\tau, a_2)+ \mathcal{L}_{3,k}(\mu,\tau, a_2)\big),
$$
where $\mathcal{L}_{1,k}, \mathcal{L}_{2,k}$ and $ \mathcal{L}_{3,k}$ are given by
\begin{align*}
\mathcal{L}_{1,k}(\tau ,\mu , a_2)&=( \lambda_\tau ^4-\mu)\frac{\seq{\gamma_D(\w),\gamma_N(\phi_{1,k}-\phi_{2,k})} \seq{\gamma_N(\phi_{1,k}),\gamma_D(\ww)}}{ (\lambda_\tau^4 - \lambda_{1,k})(\mu - \lambda_{1,k})},\\
\mathcal{L}_{2,k}(\tau ,\mu , a_2)&=( \lambda_\tau ^4-\mu )\frac{\seq{\gamma_D(\w),\gamma_N(\phi_{2,k})}\seq{ \gamma_N(\phi_{1,k}-\phi_{2,k}),\gamma_D(\ww)}}{ (\lambda_\tau ^4 - \lambda_{1,k})(\mu - \lambda_{1,k})},\\
\mathcal{L}_{3,k}(\tau ,\mu , a_2)&=( \lambda_\tau ^4-\mu )\seq{ \gamma_D(\w),\gamma_N(\phi_{2,k}) } \seq{\gamma_N(\phi_{2,k}),\gamma_D(\ww)}
\\
&\hskip 1.5cm\times \left(\frac{1}{ (\lambda_\tau^4 - \lambda_{1,k})(\mu - \lambda_{1,k})} -\frac{1}{ (\lambda_\tau^4 - \lambda_{2,k})(\mu - \lambda_{2,k})}\right).
\end{align*}
We start by examining $\mathcal{L}_{1,k}(\tau ,\mu , a_2)$. Let $\mu \in (-\infty, -\lambda_0-1)$, where $\lambda_0>0$ large enough, and let $\tau >1$. By using the Weyl's asymptomatic \eqref{1.10} we obtain
\begin{align}\label{4.38}
    \Abs{\frac{\mu-\lambda_\tau ^4}{\mu - \lambda_{\ell,k}}} \leq 1+\Abs{\frac{\lambda_{\ell,k}-\lambda_\tau ^4}{\mu - \lambda_{\ell,k}}} \leq C(\tau) k^{4/n},
\end{align}
here $C$ depends only on $\tau$. Collecting this with condition \eqref{1.13} and Lemma \ref{L4.4} we obtain
\begin{align}\label{4.39}
\sum_{k\ge1}   \abs{\mathcal{L}_{1,k}(\tau ,\mu , a_2)}&\leq C(\tau)  \big(  \sum_{k\ge1} \Abs{\frac{\seq{\gamma_N(\phi_{1,k}),\gamma_D(\ww)}}{ \lambda_\tau^4 - \lambda_{1,k}}}^2  \big)^{ 1/2}  
\cr &\hskip 1.5cm \times \big( \sum_{k\ge1} k^{4/n} \norm{\gamma_N(\phi_{1,k}-\phi_{2,k})}   \big) < + \infty.
\end{align}
Next, we examine $\mathcal{L}_{2,k}(\tau ,\mu , a_2)$. Since the condition \eqref{1.13} is fulfilled then we get
\begin{align}\label{4.41}
  \abs{\lambda_\tau^4-\lambda_{2,k}} \leq (C +1) \abs{\lambda_\tau^4-\lambda_{1,k}},\qquad k \geq 1.
\end{align}
Then by doing the same analysis as before we obtain
\begin{align}\label{4.42}
\sum_{k\ge1}   \abs{\mathcal{L}_{2,k}(\tau ,\mu , a_2)}&\leq C(\tau)  \big(  \sum_{k\ge1} \Abs{\frac{\seq{\gamma_D(\w),\gamma_N(\phi_{2,k})}}{ \lambda_\tau^4 - \lambda_{2,k}}}^2  \big)^{ 1/2}  
\cr &\hskip 1.5cm \times \big( \sum_{k\ge1} k^{4/n} \norm{\gamma_N(\phi_{1,k}-\phi_{2,k})}   \big) < + \infty.
\end{align}
We move now to examine $\mathcal{L}_{3,k}(\tau ,\mu , a_2)$. Let first rewrite 
\begin{align*}
   & \frac{\lambda_\tau ^4-\mu}{ (\lambda_\tau^4 - \lambda_{1,k})(\mu - \lambda_{1,k})} -\frac{\lambda_\tau ^4-\mu}{ (\lambda_\tau^4 - \lambda_{2,k})(\mu - \lambda_{2,k})}
   \cr&=
    \frac{\lambda_{1,k}-\lambda_{2,k}}{(\mu - \lambda_{1,k})(\mu - \lambda_{2,k})}
  -\frac{\lambda_{1,k}-\lambda_{2,k}}{(\lambda_\tau^4 - \lambda_{1,k})(\lambda_\tau^4 - \lambda_{2,k})}.
\end{align*}
This and \eqref{4.41} imply that
\begin{align}\label{4.43}
 \abs{\mathcal{L}_{3,k}(\tau ,\mu , a_2)} &\leq  \sup_{k \ge1} \abs{\lambda_{1,k}-\lambda_{2,k}} \big(\sup_{k \ge1} \Abs{\frac{\lambda_\tau^4 - \lambda_{1,k}}{\mu - \lambda_{1,k}}}\Abs{\frac{\lambda_\tau^4 - \lambda_{2,k}}{\mu - \lambda_{2,k}}}+C\big)
  \cr &\hskip 1.5cm \times
  \Abs{\frac{\seq{\gamma_D(\w),\gamma_N(\phi_{2,k})}}{ \lambda_\tau^4 - \lambda_{2,k}}}
  \Abs{\frac{\seq{\gamma_D(\ww),\gamma_N(\phi_{2,k})}}{ \lambda_\tau^4 - \lambda_{2,k}}},
\end{align}
here $C$ is a positive constant independent of $\tau$ and $k$. Notice that the function $g(x)=\frac{\abs{x}}{x-\mu}$ is monotonically decreasing for $-\lambda_0<x\leq 0$ and we have $0 \leq g(x) < \frac{-\lambda_0}{\mu+\lambda_0}$. In addition, for $x >0$, $g$ is monotonically increasing and we have $g(x)<1$. Then we deduce the following estimate 
\begin{align}\label{4.44}
  \sup_{k \ge1} \Abs{\frac{\lambda_\tau^4 - \lambda_{\ell,k}}{\mu - \lambda_{\ell,k}}}  \leq C(\tau),\quad \text{for}~~~~\ell=1,2. 
\end{align}
Using this together with condition \eqref{1.13} and Lemma \ref{L4.4} we can show that
\begin{align}\label{4.45}
  \sum_{k\ge1} \abs{\mathcal{L}_{3,k}(\tau ,\mu , a_2)} <+\infty.
\end{align}
We deduce from \eqref{4.39}, \eqref{4.42} and \eqref{4.45} that the series in $\mathcal{L}(\tau ,\mu, a_2)$ converge uniformly with respect to  $\mu < -\lambda_0-1$. Hence, by using Lebesgue's dominated convergence it easy to see that $\mathcal{L}(\tau ,\mu, a_2)$ converge to $\mathcal{L}^*(\tau,a_2)$, as $\mu \to -\infty$.\\
We move now to prove \eqref{4.37}. In light of \eqref{4.3}, \eqref{4..3} and \eqref{4.6a} one can easily see that
\begin{align}\label{4.46}
    \norm{\gamma_D(\varphi^*_{\ell,k})} \leq C \tau \qquad \ell=1,2,
\end{align}
for some positive constant $C$ independent of $\tau$. Using this together with the fact that ${Im(\lambda_\tau ^4)}=4\tau (\tau^2-1)$ we get 
\begin{align}\label{4.47}
  &\abs{ \mathcal{L}^*_{1,k}(\tau, a_2)}+\abs{ \mathcal{L}^*_{2,k}(\tau, a_2)}  \cr&\leq C \tau^{-1} \norm{\gamma_N(\phi_{1,k}-\phi_{2,k})}  \big(\norm{\gamma_N(\phi_{1,k})}+\norm{\gamma_N(\phi_{2,k})}\big).
\end{align}
In a similar manner, after rewriting 
\begin{align}\label{5.20'}
    \frac{1}{\lambda_\tau^4 - \lambda_{2,k}}-\frac{1}{\lambda_\tau^4 - \lambda_{1,k}}=\frac{\lambda_{2,k}-\lambda_{1,k}}{(\lambda_\tau^4 - \lambda_{2,k})(\lambda_\tau^4 - \lambda_{1,k})},
\end{align}
we can prove that 
\begin{align}\label{4.48}
  \abs{ \mathcal{L}^*_{3,k}(\tau, a_2)}  \leq C \tau^{-4} \abs{\lambda_{2,k}-\lambda_{1,k}}  \norm{\gamma_N(\phi_{2,k})}^2.
\end{align}
This and \eqref{4.47} immediately imply that
\begin{align}\label{4.49}
\lim_{\tau \to + \infty}  \mathcal{L}^*_{1,k}(\tau, a_2) =\lim_{\tau \to + \infty}  \mathcal{L}^*_{2,k}(\tau, a_2)  =\lim_{\tau \to + \infty}  \mathcal{L}^*_{3,k}(\tau, a_2) =0, \quad \forall k \ge 1. 
\end{align}
As a consequence we deduce from \eqref{4.36} that for any natural number $n_0>1$ we have
\begin{align}\label{4.50}
 \lim_{\tau \to + \infty}\abs{\mathcal{L}^*(\tau,a_2)} \leq \limsup_{\tau \to + \infty}   \sum_{k \geq n_0}\big(\abs{\mathcal{L}^*_{1,k}(\tau, a_2)}+\abs{\mathcal{L}^*_{2,k}(\tau, a_2)}+\abs{\mathcal{L}^*_{3,k}(\tau, a_2)}    \big).
\end{align}
Since $\norm{\gamma_N(\phi_{\ell,k})}\leq C k^{4/n}$ then we obtain from \eqref{4.47} that
\begin{align}\label{4.51}
  \limsup_{\tau \to + \infty}   \sum_{k \geq n_0}\big(\abs{\mathcal{L}^*_{1,k}(\tau, a_2)}+\abs{\mathcal{L}^*_{2,k}(\tau, a_2)}    \big)\leq C \sum_{k \geq n_0} k^{4/n} \norm{\gamma_N(\phi_{1,k}-\phi_{2,k})} .  
\end{align}
Now by sending $n_0$ to $+\infty$ in above inequality we deduce from condition \eqref{1.13} that
\begin{align}\label{4.52}
  \limsup_{\tau \to + \infty}   \sum_{k \geq 1}\big(\abs{\mathcal{L}^*_{1,k}(\tau, a_2)}+\abs{\mathcal{L}^*_{2,k}(\tau, a_2)}    \big)=0 .  
\end{align}
Moreover, by using \eqref{5.20'} together with \eqref{4.41} we get
\begin{align}
  \sum_{k \geq n_0} \abs{  \mathcal{L}^*_{3,k}(\tau, a_2)} &\leq  C \sup_{k \geq n_0}\abs{\lambda_{2,k}-\lambda_{1,k}}
 \Big( \sum_{k \geq 1} \Abs{\frac{\seq{\gamma_D(\w),\gamma_N(\phi_{2,k})}}{ \lambda_\tau^4 - \lambda_{2,k}}}^2 \cr &\hspace{2.5 cm}+\sum_{k \geq 1}
  \Abs{\frac{\seq{\gamma_D(\ww),\gamma_N(\phi_{2,k})}}{ \lambda_\tau^4 - \lambda_{2,k}}}^2   \Big).
\end{align}
Using now Lemma \ref{L4.4} we obtain 
\begin{align}\label{4.53}
  \sum_{k \geq n_0} \abs{  \mathcal{L}^*_{3,k}(\tau, a_2)} &\leq  C \sup_{k \geq n_0}\abs{\lambda_{2,k}-\lambda_{1,k}}
 \Big(\|a_2(\cdot, \tau)\|_{H^1(\Omega)}^2+1\Big).
\end{align}
Since we have 
$$\limsup_{\tau \to + \infty}   \sum_{k \geq n_0} \abs{  \mathcal{L}^*_{3,k}(\tau, a_2)}=\limsup_{\tau \to + \infty}   \sum_{k \geq 1}\abs{  \mathcal{L}^*_{3,k}(\tau, a_2)},$$
for all $n_0>1$, from \eqref{4.49}. Then by sending $n_0$ to $+\infty$ in \eqref{4.53} we get that 
\begin{align*}\
  \limsup_{\tau \to + \infty} \sum_{k \geq 1} \abs{  \mathcal{L}^*_{3,k}(\tau, a_2)}&\leq  C (\lim_{\tau \to + \infty} \|a_2(\cdot,\tau)\|_{H^1(\Omega)}^2+1)   \limsup_{k \to + \infty}\abs{\lambda_{2,k}-\lambda_{1,k}},
\end{align*}
here $C>0$ independent of $\tau$ and $k$. Hence, collecting this with \eqref{4.52} we get our expected result.
\end{proof}

\subsection{Completion of the proof of the main result}

We are now in position to complete the proof of our main result.
\begin{proof}[Proof of Theorem \ref{Th1.1}]
We start by proving $\mathrm{curl}~B=0$. Recall that we have
\begin{equation}\label{5.26}
\mathcal{K}(\tau,\mu,\gamma_D(\w))=\gamma_N(u_{1}(\lambda _\tau^4))-\gamma_N(u_{2}(\lambda_\tau^4 ))- \gamma_N(w_{1,2}(\mu ))\quad \textrm{on}\,\, \Gamma.
\end{equation}
Then we get
\begin{align}\label{5.27}
\mathcal{L}(\tau,\mu, a_2) &=\seq{\mathcal{K}(\tau,\mu,\gamma_D(\w)),\gamma_D(\ww)}
\cr
&=S_{1}(\tau,a_2)-S_{2}(\tau, a_2)-\seq{\gamma_N(w_{1,2}(\mu )),\gamma_D(\ww)}.
\end{align}
 Sending $\mu$ to $-\infty$ in the above identity and using Lemmas \ref{L3.2} and \ref{L4.5} we obtain
\begin{equation}\label{5.28}
S_{1}(\tau,a_2)-S_{2}(\tau, a_2)=\mathcal{L}^*(\tau,a_2),
\end{equation}
for all $a_2 \in \mathcal{C}^{\infty}(\overline{\Omega})$ satisfy the assumptions \eqref{4.6a}. In addition, \eqref{4.37} implies that \\$\left(S_{1}(\tau,a_2)-S_{2}(\tau, a_2)\right)$ is bounded for $\tau >1$. Then we get
\begin{align}\label{5.28'}
  \lim_{\tau\to+\infty}\tau^{-1}\left(S_{1}(\tau,a_2)-S_{2}(\tau, a_2)\right)=0.  
\end{align}
This, for $a_2=1$, and Lemma \ref{L4.2} yield
\begin{equation}\label{5.29}
\int_{\Omega} e^{-ix\cdot\xi}\omega\cdot B(x) dx=0.
\end{equation}
The above equality is valid for all $\xi \in \R^n$ such that $\omega \cdot \xi=0$. Then for a fixed $\xi \in \R^n$, let us choose $\omega=\frac{\xi_j e_k-\xi_k e_j}{\abs{\xi_j e_k-\xi_k e_j}}$, $j \neq k$, where $(e_j)_j$ is the canonical basis of $\R^n$. Then after multiplying \eqref{5.29} by $\abs{\xi_j e_k-\xi_k e_j}$ we get
\begin{align*}
    \mathcal{F}(\mathrm{curl}~B_2-\mathrm{curl}~B_1)(\xi)=0\quad \forall \xi \in \R^n,
\end{align*}
where $\mathcal{F}$ denotes the Fourier transform. Since $\mathrm{curl}~B_\ell\in L^1(\Omega)$, $\ell=1,2$, then we deduce from the injectivity of the Fourier transform that $\mathrm{curl}~B_2=\mathrm{curl}~B_1$.\\
We move now to prove \eqref{1.15}. By the identity \eqref{5.28} we have 
\begin{equation}\label{5.36}
\limsup_{\tau\to+\infty}\left|S_{1}(\tau,a_2)-S_{2}(\tau, a_2)\right|=\limsup_{\tau\to+\infty} |\mathcal{L}^*(\tau,a_2 )|,
\end{equation}
for all $a_2 \in \mathcal{C}^{\infty}(\overline{\Omega})$ satisfy the assumptions \eqref{4.6a}. Then by using Lemma \ref{L4.5}, for $a_2=1$, together with Lemma \ref{L4.3} we obtain that
\begin{equation}\label{5.37}
\abs{\widehat{q}(\xi)}\leq C\limsup_{k\to+\infty}|\lambda_{1,k}-\lambda_{2,k}|,
\end{equation}
here $C>0$ independent of $\tau$ and $\xi$. Let us denote $\delta= \limsup_{k\to+\infty}|\lambda_{1,k}-\lambda_{2,k}|$. Let $R>1$, to be chosen later, then upon applying the inequality \eqref{5.37} we obtain
\begin{align*}
  \| q \|_{H^{-1}(\Omega)}^2 
  &\leq   \| q\|_{H^{-1}(\R^n)}^2=\int_{\langle \xi \rangle \leq R} \abs{\widehat{q}(\xi)}^2 \langle \xi \rangle^{-2} d\xi+\int_{\langle \xi \rangle >  R} \abs{\widehat{q}(\xi)}^2 \langle \xi \rangle^{-2} d\xi\cr
 &\leq C(\delta^2 R^{n-2}+M^2 R^{-2}),
\end{align*}
here $C >0$ depends only on $\Omega$ and $n$. For $\delta \leq 1$, let us choose $R>1$ such that $\delta^2 R^{n-2}=R^{-2}$ that is $R=\delta^{-2/n}$. Then we deduce from the above estimate that
\begin{align}\label{5.42'}
   \| q \|_{H^{-1}(\Omega)}
   &\leq C (\limsup_{k\to+\infty}|\lambda_{1,k}-\lambda_{2,k}|)^{\frac{2}{n}} ,
\end{align}
here $C >0$ depends only on $\Omega$, $n$ and $M$. Now for $\delta > 1$ we observe that we have the same estimate as above since in this case we get immediately
\begin{align}\label{5.43'}
  \| q \|_{H^{-1}(\Omega)} \leq C M
   \leq C M (\limsup_{k\to+\infty}|\lambda_{1,k}-\lambda_{2,k}|)^{\frac{2}{n}}.
\end{align}
In order to complete the proof of \eqref{1.15}, let 
$\eta_1=\frac{1}{2}(\sigma_1-\frac{n}{2})>0$. Then upon applying the Sobolev's 
embedding theorem together and the interpolation theorem with $\kappa_1\in (0,1)$, we end up getting the following inequality 
\begin{align*}
 \|q\|_{L^{\infty}(\Omega)} & \leq \|q\|_{H^{n/2+\eta_1 }(\Omega)} \\
& \leq \|q\|_{H^{-1}(\Omega)}^{\kappa_1}\|q\|_{H^{n/2+2 \eta_1 }(\Omega)}^{ 1-\kappa_1} \\
& \leq C M^{ 1-\kappa_1}  (\limsup_{k\to+\infty}|\lambda_{1,k}-\lambda_{2,k}|)^{\frac{2\kappa_1}{n}}.
\end{align*}
This concludes our first stability estimate \eqref{1.15} with $\theta_1=\frac{2\kappa_1}{n}$. \\
We move now to prove \eqref{1.15'}. Since we have $\mathrm{curl}~B=0$ then by Hodge decomposition the vector field $B$ can be represented as $B=\nabla \psi$, where $\psi \in W^{2,+\infty}(\Omega, \R)$ solving the following boundary value problem
\begin{equation}\label{5.37'}
\left\{\begin{array}{ll}
\Delta \psi  = \dive~B & \mathrm{in}\, \Omega,\cr
\psi  =0  &\mathrm{on}\, \Gamma.
\end{array}
\right.
\end{equation}
By using Lemma \ref{L4.4'} together with the identity \eqref{5.36} we get 
\begin{align*}
|\xi|^2 \abs{\widehat{\psi}(\xi)}\leq C\bigr(\limsup_{\tau\to+\infty} |\mathcal{L}^*(\tau,a_2)|+ |\xi|^2\|q\|_{L^2(\Omega)}\bigr),
\end{align*}
here $a_2$ is given by \eqref{4.27a} and $C$ is a positive constant independent of $\tau$ and $\xi$. Then by using \eqref{1.15} and Lemma \ref{L4.5} together with \eqref{4.27b}, we end up by getting the following estimate for $\delta \leq 1$
\begin{align}\label{5.39}
|\xi|^2 \abs{\widehat{\psi}(\xi)}&\leq C(1+|\xi|^4+|\xi|^2)\delta^{\theta_1}\cr
&\leq C(1+|\xi|^2)^2\delta^{\theta_1},
\end{align}
here $\theta_1=\frac{2\kappa_1}{n}$ and $C$ is a positive constant independent of $\xi$. Let now $R>1$, to be chosen later, then by using Parseval theorem we get
\begin{align*}
  \| \Delta \psi \|_{L^2(\Omega)}^2 
  &\leq  \|  \widehat{\Delta\psi} \|_{L^2(\R^n)}^2=\int_{\langle \xi \rangle  \leq R} \abs{\widehat{\Delta\psi}(\xi)}^2 d\xi+\int_{\langle \xi \rangle  > R} \abs{\widehat{\Delta \psi}(\xi)}^2 d\xi \\
 &\leq C\bigr( \int_{\langle \xi \rangle  \leq R} |\xi|^4 \abs{\widehat{\psi}(\xi)}^2 d\xi+\int_{\langle \xi \rangle  > R} |\xi|^2 \abs{\widehat{\nabla\psi}(\xi)}^2 d\xi\bigr).
\end{align*}
 Therefore, using the estimate \eqref{5.39} we get
\begin{align}
     \| \Delta \psi \|_{L^2(\Omega)}^2 
  &\leq C \bigr(\delta^{2\theta_1} \int_{\langle \xi \rangle \leq R} (1+|\xi|^2)^4 d\xi+\int_{\langle \xi \rangle  > R} \langle \xi \rangle^2 \abs{\widehat{B}(\xi)}^2 d\xi \bigr)  \cr
    &\leq C \bigr(\delta^{2\theta_1} R^8 \int_{\langle \xi \rangle \leq R} d\xi+R^{2(1-\sigma_2)}\int_{\langle \xi \rangle  > R} \langle \xi \rangle^{2\sigma_2}  \abs{\widehat{B}(\xi)}^2 d\xi \bigr)  \cr
    &\leq C \bigr(\delta^{2\theta_1} R^{n+8}+R^{2(1-\sigma_2)} \|B\|_{H^{\sigma_2}(\R^n)}^2 \bigr)  \cr
    &\leq C \bigr(\delta^{2\theta_1} R^{n+8} +R^{2(1-\sigma_2)} M^2 \bigr) ,\label{5.41}
\end{align}
here $C >0$ depends only on $\Omega$ and $n$. Choosing $R>1$ such that $\delta^{2\theta_1} R^{n+8}=R^{2(1-\sigma_2)}$ that is $R=\delta^{-2\theta_1/(2\sigma_2+n+6)}$. Then we deduce from the above estimate that
\begin{align}\label{5.42}
  \| \Delta \psi \|_{L^2(\Omega)}
   &\leq C (\limsup_{k\to+\infty}|\lambda_{1,k}-\lambda_{2,k}|)^{\frac{2\theta_1(\sigma_2-1)}{2\sigma_2+n+6}} ,
\end{align}
here $C >0$ depends only on $\Omega$, $n$ and $M$. Now for $\delta > 1$ we obtain immediately 
\begin{align}\label{5.43}
  \| \Delta \psi \|_{L^2(\Omega)}  \leq C M
   \leq C M (\limsup_{k\to+\infty}|\lambda_{1,k}-\lambda_{2,k}|)^{\frac{2\theta_1(\sigma_2-1)}{2\sigma_2+n+6}}.
\end{align}
By using the two above estimates we end up by getting the following stability estimate 
\begin{align}\label{5.44}
  \| \dive~B \|_{L^2(\Omega)}  
   \leq C  (\limsup_{k\to+\infty}|\lambda_{1,k}-\lambda_{2,k}|)^{\frac{2\theta_1(\sigma_2-1)}{2\sigma_2+n+6}}.
\end{align}
Since $\psi$ solves the boundary value problem \eqref{5.37'} then by the theorem of regularity elliptic we get
\begin{align}\label{5.46}
    \|\psi \|_{H^2(\Omega)} \leq C  \|\dive~B \|_{L^2(\Omega)}\leq C  (\limsup_{k\to+\infty}|\lambda_{1,k}-\lambda_{2,k}|)^{\frac{2\theta_1(\sigma_2-1)}{2\sigma_2+n+6}}.
\end{align}
In order to complete the proof of the theorem, let denote $\eta_2 =\frac{1}{2}(\sigma_2-\frac{n}{2})>0$. Then by using Sobolev's embedding
theorem together with interpolation theorem, we get for $\kappa_2 \in (0,1)$ that
\begin{align*}
  \|B\|_{L^\infty(\Omega)}=  \| \nabla \psi \|_{L^\infty(\Omega)} &\leq C \| \nabla\psi \|_{H^{\frac{n}{2}+\eta_2 }(\Omega)} \cr
&\leq C\| \nabla \psi \|_{H^{1}(\Omega)}^{\kappa_2} \| \nabla\psi \|_{H^{\frac{n}{2}+2\eta_2 }(\Omega)}^{1-\kappa_2} 
\cr &\leq C M^{1-\kappa_2} \|\psi \|_{H^2(\Omega)}^{\kappa_2}.
\end{align*}
This and the estimate \eqref{5.46} imply our second stability estimate \eqref{1.15'} with $\theta_2=\frac{2\theta_1\kappa_2(\sigma_2-1)}{2\sigma_2+n+6}$, and $\theta_2\in (0,1)$. This completes the proof of Theorem \ref{Th1.2}.
\end{proof}



\end{document}